\documentclass[oneside,a4paper,12pt,notitlepage]{article}
\usepackage[left=0.9in, right=0.9in, top=1.5in, bottom=1.5in]{geometry}
\usepackage[T1]{fontenc} 
\usepackage[utf8]{inputenc} 
\usepackage[english]{babel} 
\usepackage{lipsum} 
\usepackage{lmodern}
\usepackage{amssymb}
\usepackage{amsthm}
\usepackage{bm}
\usepackage{mathtools}
\usepackage{xcolor}
\usepackage{dsfont}
\usepackage{enumerate}

\usepackage{yhmath}
\usepackage{stmaryrd}
\usepackage{amsmath}
\usepackage{tikz}
\usepackage{verbatim}
\usepackage{graphicx}
\usepackage{enumitem}
\usepackage{footnote} %

\usepackage{braket}
\usepackage{esint}
\newcommand{\abs}[1]{{\left|#1\right|}}
\newcommand{\norma}[1]{{\left\Vert#1\right\Vert}}

\usepackage{booktabs}
\usepackage{graphicx}
\usepackage{tikz}
\usetikzlibrary{patterns}
\usepackage{multicol}
\usepackage{caption}
\usepackage{enumerate}
\usepackage[skins,theorems]{tcolorbox}
\tcbset{highlight math style={enhanced,
		colframe=black,colback=white,arc=0pt,boxrule=1pt}}
\captionsetup{tableposition=top,figureposition=bottom,font=small}
\pagestyle{headings}
\theoremstyle{definition}
\newtheorem{definizione}{Definition}[section]
\theoremstyle{plain}
\newtheorem{teorema}{Theorem}[section]

\newtheorem{lemma}[teorema]{Lemma}
\newtheorem{prop}[teorema]{Proposition}
\newtheorem{corollario}[teorema]{Corollary}
\theoremstyle{definition}
\newtheorem{esempio}{Example}[section]
\newtheorem{oss}[esempio]{Remark}

\DeclareMathOperator{\R}{\mathbb{R}}

\DeclareMathOperator*{\argmin}{\text{arg}\,\text{min}}
\usepackage{hyperref}
\hypersetup{linktoc=none, bookmarksnumbered, colorlinks=true, linkcolor=magenta, citecolor=cyan}

\title{On the second eigenvalue of the infinity Laplacian with Robin boundary conditions}
\author{Vincenzo Amato, Alba Lia Masiello, Carlo Nitsch, Cristina Trombetti}
\date{\today}

\newcommand{\Addresses}{{
 \bigskip 
 \footnotesize 
 \noindent \textit{E-mail address}, V.~ Amato: \texttt{v.amato@ssmeridionale.it} 
  
   \medskip 
 
  \noindent\textsc{Mathematical and Physical Sciences for Advanced Materials and Technologies, Scuola Superiore Meridionale, Largo San Marcellino 10, 80138 Napoli, Italy. }

 \medskip

 \textit{E-mail address}, A.L.~Masiello: \texttt{masiello@altamatematica.it}

\noindent \textsc{
 	Holder of a research grant from Istituto Nazionale di Alta Matematica "Francesco Severi" at Dipartimento di Matematica e Applicazioni "R. Caccioppoli", Via Cintia, Complesso Universitario Monte S. Angelo, 80126 Napoli, Italy.}

 \medskip 
 
 \textit{E-mail address}, C.~Nitsch: \texttt{c.nitsch@unina.it}

  \medskip 
 
 \textit{E-mail address}, C.~Trombetti: \texttt{cristina@unina.it} 
   \medskip

 \noindent\textsc{Dipartimento di Matematica e Applicazioni ``R. Caccioppoli'', Universit\`a degli studi di Napoli Federico II, Via Cintia, Complesso Universitario Monte S. Angelo, 80126 Napoli, Italy.}\par\nopagebreak 

}}

\begin{document}
\maketitle 
	 \vspace{-0.8cm}
	\begin{abstract}
		 We study the behaviour, as $p \to +\infty$, of the second eigenvalues of the $p$-Laplacian with Robin boundary conditions and the limit of the associated eigenfunctions. We prove that, {up to some regularity of the set}, 
   the limit of the second eigenvalues is actually the second eigenvalue of the so-called $\infty$-Laplacian.
		 
		 \textsc{MSC 2020:}   35J92, 35J94, 35P15.\\
\textsc{Keywords:} $p$-Laplacian, Robin boundary conditions, eigenvalue problem, infinity Laplacian.
	\end{abstract}

\section{Introduction}
Let $\beta$ be a positive parameter and let $\Omega$ be a bounded and open set of $\R^n$, $n\ge 2$, with Lipschitz boundary. 

We aim to study the limit, as $p$ goes to $+\infty$, of $\lambda_{2,p}(\Omega)$, the second eigenvalue of the $p$-Laplace operator with Robin boundary conditions, that is to say, the second smallest value $\lambda$ for which the following problem admits a nontrivial solution \begin{equation*}
\begin{cases}
-\Delta_p u= \lambda \abs{u}^{p-2} u & \text{ in } \Omega \\
\abs{\nabla u}^{p-2} \displaystyle{\frac{\partial u}{\partial \nu}} + \beta^p \abs{u}^{p-2}u =0 & \text{ on } \partial \Omega.
\end{cases}
\end{equation*}

We recall that, for $p \ne 2$, we do not have a complete characterization of all the possible eigenvalues of the $p$-Laplace operator, however, the first two can be precisely defined using a min-max principle. Furthermore, an increasing and divergent sequence of eigenvalues can be constructed, but it is not known whether it contains all the $p$-eigenvalues (see for example \cite{spectrump, Peral2, Peral}).

In a previous paper, \cite{AMNT}, we studied the limit of $\lambda_{1,p}(\Omega)$, the first eigenvalue of the $p$-Laplace operator with Robin boundary conditions. In particular, we proved that
\begin{equation}
    \label{var_carlambda1}
    \lim_{p\to \infty} (\lambda_{1,p}(\Omega))^{\frac{1}{p}}=\lambda_{1,\infty}(\Omega):=\inf_{\substack{w\in W^{1,\infty}(\Omega) \\ \norma{w}_{L^\infty(\Omega)}=1}}
  \max \left\lbrace \norma{\nabla w}_{L^\infty(\Omega)}, \beta\norma{w}_{L^\infty(\partial\Omega)} \right\rbrace,
\end{equation}
and
that the following geometric characterization holds
\begin{equation}
    \label{lambda1}
\lambda_{1,\infty}(\Omega)=\frac{1}{\displaystyle{\frac{1}{\beta} + r(\Omega)}},
\end{equation}
where $r(\Omega)$ is the inradius of the set $\Omega$, i.e., the radius of the largest ball contained in $\Omega$.

Moreover, for a set of class $C^2$, we showed that the right-hand side of \eqref{lambda1} is, in fact, the first eigenvalue of an $\infty$-Laplacian problem, in the sense that it is the smallest value $\lambda$ for which the following problem admits a nontrivial viscosity solution
\begin{equation}
\label{ciao2}
  \begin{cases}
  \min\Set{\abs{\nabla u} - \lambda u, - \Delta_\infty u}=0 &\text{ in } \Omega,\\
  -\min\Set{\abs{\nabla u} - \beta u, -\displaystyle{\frac{\partial u}{\partial \nu}} }=0 &\text{ on } \partial \Omega,
\end{cases}
\end{equation}
where $\Delta_\infty$, the so-called $\infty$-Laplacian, is defined by
$$\Delta_\infty u = \left\langle D^2 u \cdot \nabla u, \nabla u \right\rangle.$$

Hence, our goal is to conduct a similar analysis for $\lambda_{2,\infty}(\Omega)$, obtaining a variational characterization as in \eqref{var_carlambda1} for the limit of the second eigenvalue,  giving it a geometrical characterization as in \eqref{lambda1}, and showing that this limit appears in a PDE problem analogous to \eqref{ciao2}. 

A characterization of $\lambda_{2,\infty}(\Omega)$ can be given in terms of the following geometric quantity
\begin{equation}\label{ess}
   s(\Omega)={\max} \left\{t: \exists x_1, x_2\in \overline{\Omega} \text{ such that }  \abs{x_1-x_2}\ge 2t, \norma{C_{i,t}}_{L^\infty(\partial\Omega)}\le\frac{1}{\beta t}, \, i=1,2  \right\},
\end{equation} 
where $C_{i,t}$ is the function
$$C_{i,t}(x)=\frac{(t-\abs{x-x_i})_+}{t}.$$

We stress that $\norma{C_{i,t}}_\infty=1$, and the function is supported in $\overline{\Omega} \cap B(x_i,t)$, where $B(x_i,t)$ is the ball centered at $x_i$ with radius $t$.

Hence, we prove the following geometric characterization.
\begin{teorema}\label{autolimdiri}
    Let $\Omega$ be a bounded {Lipschitz} set, let $\{\lambda_{2,p}(\Omega)\}$ be the sequence of the second $p$-Laplace eigenvalues, and let $s(\Omega)$ be the quantity defined in \eqref{ess}. Then, we have 

    \begin{equation*}
    \lim_{p\to\infty} (\lambda_{2,p}(\Omega))^{\frac{1}{p}}= \frac{1}{s(\Omega)}=:\lambda^{\beta}_{2,\infty}(\Omega).
    \end{equation*}
\end{teorema}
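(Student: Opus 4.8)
The plan is to prove the two inequalities $\limsup_p (\lambda_{2,p}(\Omega))^{1/p} \le 1/s(\Omega)$ and $\liminf_p (\lambda_{2,p}(\Omega))^{1/p} \ge 1/s(\Omega)$ separately, using on one side a test-function construction and on the other a compactness argument on (normalized) eigenfunctions. For the upper bound, I would start from the variational (Rayleigh/min-max) characterization of $\lambda_{2,p}(\Omega)$: it can be written as an inf over pairs of functions (or over symmetric sets of genus $2$, i.e. via the Krasnoselskii genus, or more elementarily over a two-dimensional construction) of the maximum of a $p$-Rayleigh quotient $\mathcal{R}_p(w) = \bigl(\int_\Omega |\nabla w|^p + \beta^p \int_{\partial\Omega}|w|^p\bigr)/\int_\Omega |w|^p$. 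Given $t < s(\Omega)$, pick $x_1,x_2\in\overline\Omega$ with $|x_1-x_2|\ge 2t$ and $\|C_{i,t}\|_{L^\infty(\partial\Omega)}\le 1/(\beta t)$ for $i=1,2$. The two cone functions $C_{1,t},C_{2,t}$ have disjoint supports (balls of radius $t$ around points at distance $\ge 2t$), so $\mathrm{span}\{C_{1,t},C_{2,t}\}$ is a legitimate two-dimensional competitor. On any normalized linear combination one estimates $\|\nabla(\cdot)\|_\infty \le 1/t$ (each cone has slope $1/t$) and $\beta\|\cdot\|_{L^\infty(\partial\Omega)} \le \beta\cdot\max_i\|C_{i,t}\|_{L^\infty(\partial\Omega)}\le 1/t$ (using the disjoint supports so that on $\partial\Omega$ the combination is governed by a single cone up to sign). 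Taking $p$-th roots and letting $p\to\infty$ turns the $L^p$ Rayleigh quotient into the $L^\infty$ one (a standard $\|\cdot\|_p \to \|\cdot\|_\infty$ argument, already used for $\lambda_{1,p}$ in \cite{AMNT}), yielding $\limsup_p (\lambda_{2,p})^{1/p} \le 1/t$; letting $t\uparrow s(\Omega)$ finishes this direction.

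For the lower bound I would take a sequence $u_p$ of second eigenfunctions, normalized by $\|u_p\|_{L^\infty(\Omega)}=1$. From the eigenvalue equation and the uniform bound $\limsup_p(\lambda_{2,p})^{1/p}<\infty$ (which the upper bound just gave), one derives a uniform bound on $\|\nabla u_p\|_{L^p}$ and hence, by a Morrey-type / Arzel\`a–Ascoli argument on $W^{1,q}$ for every fixed $q$, extracts a subsequence $u_p \to u_\infty$ uniformly on $\overline\Omega$ with $u_\infty \in W^{1,\infty}(\Omega)$ and $\|u_\infty\|_{L^\infty}=1$. The key structural fact to extract is that $u_\infty$ must change sign and, more precisely, the nodal structure of the second eigenfunction survives in the limit: I would use that $\lambda_{2,p}$ is characterized (Cuesta–De Figueiredo–Gossez type) so that any second eigenfunction $u_p$ has exactly two nodal domains, and that $u_\infty$ inherits a "two bump" structure with a positive part and a negative part whose supports are separated. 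Then, setting $L=\liminf_p(\lambda_{2,p})^{1/p}$, a lower bound argument analogous to the one for $\lambda_{1,\infty}$ applied on each nodal domain shows that the positive bump forces a ball of radius $\ge 1/L$ (roughly, $\beta\|u_\infty\|_{L^\infty(\partial\Omega)} \le L\|u_\infty\|_\infty$ on that domain, and the gradient bound $\|\nabla u_\infty\|_\infty \le L$ forces the maximum point to be at distance $\ge 1/L$ from the part of the boundary where $u_\infty$ is small), and likewise for the negative bump; the two maximum points of $|u_\infty|$ are at mutual distance $\ge 2/L$ because the bumps are separated by the nodal set. Translating the resulting geometric data into the defining conditions of $s(\Omega)$ gives $s(\Omega) \ge 1/L$, i.e. $L \ge 1/s(\Omega)$.

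The main obstacle I anticipate is the lower bound, specifically controlling the behaviour of the limiting eigenfunction $u_\infty$ on the boundary and making rigorous the claim that its positive and negative "bumps" are genuinely separated and each satisfies the cone condition appearing in \eqref{ess}. Passing the nodal-domain count to the limit is delicate: the nodal set of $u_p$ could degenerate, and one must rule out that $u_\infty$ collapses to (say) a single-signed function or that the two bumps merge. I expect this requires a careful quantitative argument — e.g. showing that on each nodal domain $\Omega_p^{\pm}$ the eigenfunction satisfies the same inequality that characterizes $\lambda_{1,p}$ of that subdomain, so that $\lambda_{2,p}^{1/p} \ge \lambda_{1,\infty}(\Omega_p^{\pm})$-type bounds hold and survive in the limit, combined with the fact (from the upper bound construction, run in reverse) that these bounds are sharp. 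The regularity hypothesis on $\partial\Omega$ (Lipschitz, and $C^2$ for the PDE identification) enters precisely here, to guarantee that traces converge and that the functions $C_{i,t}$ restricted to $\partial\Omega$ behave well; where more than Lipschitz regularity is genuinely needed I would isolate as a separate lemma, mirroring the corresponding step in \cite{AMNT}.
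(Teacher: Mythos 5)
Your upper bound is essentially the paper's own Proposition \ref{autolimdirimezzo}: the span of the two disjoint cones is a genus-two competitor in \eqref{var}, and the $p\to\infty$ limit of the Rayleigh quotient gives $\limsup_p(\lambda_{2,p})^{1/p}\le 1/s(\Omega)$. That part is fine.

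The lower bound, however, has a genuine gap, and it sits exactly where you flag it: you never actually prove that the limit $u_\infty$ changes sign, i.e.\ that the positive and negative bumps do not collapse. Your proposed route --- invoking an exact two-nodal-domain property of $u_p$ and passing the nodal structure to the limit --- is precisely the step you admit you cannot make rigorous, and it is not how the difficulty is resolved in the paper. The paper's argument is quantitative: testing the weak formulation with $u_{p}^{\pm}$ shows $(\lambda_{2,p})^{1/p}\ge(\lambda_{1,p}(\Omega_p^{\pm}))^{1/p}$, and a Faber--Krahn-type bound for the first Robin eigenvalue (from \cite{DLG}) converts this into a lower bound $|\Omega_p^{\pm}|\ge \omega_n\bigl(s(\Omega)-\tfrac1\beta\bigr)^n$ up to errors vanishing as $p\to\infty$; the separate claim $s(\Omega)>1/\beta$ (proved via inner parallel sets, and valid only for $\beta>2/D(\Omega)$) makes this bound nontrivial, and a Harnack inequality applied to $u_\infty$ --- which is first shown to be a viscosity solution of the limiting problem --- rules out a one-signed limit. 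Without some such quantitative non-degeneracy input, your sketch does not exclude that $u_\infty\ge 0$. Related to this, you miss the necessary case split: for $\beta\le 2/D(\Omega)$ the sign-change lemma is not available (the case $\beta=2/D(\Omega)$ is in fact left open in the paper), and the lower bound is instead obtained by comparison with the Neumann eigenvalue, since then $1/s(\Omega)=2/D(\Omega)$. Finally, once sign change is granted, your geometric conclusion should be phrased via the mixed Dirichlet--Robin eigenvalue of each nodal region, $\liminf_p(\lambda_{2,p})^{1/p}\ge\max\{\lambda^{\mathrm{mix}}_\infty(\Omega^+_\infty),\lambda^{\mathrm{mix}}_\infty(\Omega^-_\infty)\}$, and its geometric formula \eqref{rossi}: your statement that each bump ``forces a ball of radius $\ge 1/L$ in $\Omega$'' is too strong for Robin conditions (the cone point need only be far from the nodal set $\Gamma_1^{\pm}$, with the trace on $\partial\Omega$ controlled by $1/(\beta t)$, which is exactly the admissibility condition in \eqref{ess}); the correct distance estimate $|x_+-x_-|\ge d(x_+,\Gamma_1^+)+d(x_-,\Gamma_1^-)\ge 2/\tilde\lambda$ then yields $s(\Omega)\ge 1/\tilde\lambda$ as in \eqref{mixmaggiore}.
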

Throughout the article, when it is clear, we will refer to this quantity only as $\lambda_{2,\infty}(\Omega)$, omitting the superscript $\beta$.

Later, we define  $$\mathcal{S}=\left\{v\in W^{1,\infty}(\Omega): \norma{v}_{L^\infty(\Omega)}=1\right\},$$ and  we consider $u_{1,\infty}$ to be the limit of the first Robin $p$-eigenfunctions, normalized such that $\norma{u_{1,\infty}}_{L^\infty(\Omega)}=1$. 

We have the following variational characterization for $\lambda_{2,\infty}(\Omega)$.
\begin{teorema}
\label{var:carteorema}
Let $\Gamma$ be the set of all the continuous paths in $\mathcal{S}$ joining $u_{1,\infty}$ to $-u_{1,\infty}$. Hence

    \begin{equation*}
 \lambda_{2,\infty}(\Omega)=\displaystyle{\inf_{\gamma\in \Gamma}\sup_{u\in\gamma}\max\left\{\norma{\nabla u}_{L^\infty(\Omega)}, \beta \norma{u}_{L^\infty(\partial\Omega)}\right\}}.
    \end{equation*}
\end{teorema}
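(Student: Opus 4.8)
The plan is to pass to the limit $p\to+\infty$ in the mountain--pass characterization of $\lambda_{2,p}(\Omega)$ and to combine the result with Theorem~\ref{autolimdiri}. Recall that the second Robin $p$--eigenvalue is described by
\[
\lambda_{2,p}(\Omega)^{1/p}=\inf_{\gamma\in\Gamma_p}\ \sup_{u\in\gamma}\ \mathcal R_p(u),\qquad
\mathcal R_p(u):=\left(\int_\Omega|\nabla u|^p\,dx+\beta^p\int_{\partial\Omega}|u|^p\,d\mathcal H^{n-1}\right)^{1/p},
\]
where $\Gamma_p$ is the family of continuous paths on $\{\|u\|_{L^p(\Omega)}=1\}$ joining $u_{1,p}$ to $-u_{1,p}$. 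Write $\mathcal R_\infty(u)=\max\{\|\nabla u\|_{L^\infty(\Omega)},\beta\|u\|_{L^\infty(\partial\Omega)}\}$ and let $L_\infty$ denote the right--hand side of the asserted identity. By Theorem~\ref{autolimdiri} we have $\lambda_{2,p}(\Omega)^{1/p}\to\lambda_{2,\infty}(\Omega)=1/s(\Omega)$, so it suffices to prove $\lambda_{2,\infty}(\Omega)\le L_\infty$ and $L_\infty\le\lambda_{2,\infty}(\Omega)$ separately.

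\emph{Step 1: $\lambda_{2,\infty}(\Omega)\le L_\infty$ (recovery paths).} Fix $\gamma\in\Gamma$ with $\mu:=\sup_{u\in\gamma}\mathcal R_\infty(u)<\infty$; then every $u\in\gamma$ is $\mu$--Lipschitz. Since such a $u$ attains $|u|=1$ somewhere in $\overline\Omega$ and $\Omega$ is Lipschitz (so $|\Omega\cap B(x,\rho)|\ge c_\Omega\rho^n$ for $x\in\overline\Omega$, $\rho\le\rho_\Omega$), one gets $\|u\|_{L^p(\Omega)}\ge(1-\delta)\big(c_\Omega(\delta/\mu)^n\big)^{1/p}$ for every $\delta\in(0,1)$, hence $\inf_{u\in\gamma}\|u\|_{L^p(\Omega)}\to1$ (the bound $\le|\Omega|^{1/p}$ being trivial). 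Together with $\mathcal R_p(u)\le(|\Omega|+\mathcal H^{n-1}(\partial\Omega))^{1/p}\,\mathcal R_\infty(u)$, this shows that the renormalized path $\tilde\gamma=\{u/\|u\|_{L^p(\Omega)}:u\in\gamma\}$, which joins $\pm u_{1,\infty}/\|u_{1,\infty}\|_{L^p(\Omega)}$, satisfies $\limsup_p\sup_{\tilde\gamma}\mathcal R_p\le\mu$. It remains to join $u_{1,p}$ to $u_{1,\infty}/\|u_{1,\infty}\|_{L^p(\Omega)}$ (and its odd reflection at the other end) by short paths: along the segment $w_t=(1-t)u_{1,p}+t\,u_{1,\infty}$, convexity of the $L^p$ norm together with $\|\nabla u_{1,p}\|_{L^p(\Omega)}\le\lambda_{1,p}^{1/p}$, $\beta\|u_{1,p}\|_{L^p(\partial\Omega)}\le\lambda_{1,p}^{1/p}$, the convergences $u_{1,p}\to u_{1,\infty}$ uniformly and $\lambda_{1,p}^{1/p}\to\lambda_{1,\infty}$ from \cite{AMNT}, and $\mathcal R_\infty(u_{1,\infty})=\lambda_{1,\infty}$ from \eqref{var_carlambda1}, yield $\sup_t\mathcal R_p\big(w_t/\|w_t\|_{L^p(\Omega)}\big)\to\lambda_{1,\infty}$, which is $\le\mu$ because $u_{1,\infty}$ lies on $\gamma$. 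Concatenating produces $\gamma_p\in\Gamma_p$ with $\limsup_p\sup_{\gamma_p}\mathcal R_p\le\mu$, whence $\lambda_{2,p}(\Omega)^{1/p}\le\sup_{\gamma_p}\mathcal R_p$ gives $\lambda_{2,\infty}(\Omega)=\lim_p\lambda_{2,p}(\Omega)^{1/p}\le\mu$; taking the infimum over $\gamma\in\Gamma$ yields $\lambda_{2,\infty}(\Omega)\le L_\infty$.

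\emph{Step 2: $L_\infty\le\lambda_{2,\infty}(\Omega)=1/s(\Omega)$ (explicit path).} Let $s=s(\Omega)$ and let $x_1,x_2\in\overline\Omega$ realize \eqref{ess}, so $|x_1-x_2|\ge2s$ and $\|C_{i,s}\|_{L^\infty(\partial\Omega)}\le\tfrac1{\beta s}$; in particular $\mathcal R_\infty(C_{i,s})\le1/s$, and the constraint forces $\operatorname{dist}(x_i,\partial\Omega)\ge s-1/\beta$, whence $r(\Omega)\ge s-1/\beta$ and \eqref{lambda1} gives $\lambda_{1,\infty}\le1/s$, so also $\mathcal R_\infty(u_{1,\infty})=\lambda_{1,\infty}\le1/s$. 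Fix a maximum point $x_0$ of $u_{1,\infty}$ and set $g_i=\max\{u_{1,\infty},C_{i,s}\}$ and $F_i(x)=\min\{1,\sup_{y\in\overline\Omega}(g_i(y)-\tfrac1s|x-y|)\}$. Then $F_i$ is $\tfrac1s$--Lipschitz, $g_i\le F_i\le1$ (hence $F_i(x_0)=F_i(x_i)=1$), and $F_i\le\tfrac1{\beta s}$ on $\partial\Omega$: for $x\in\partial\Omega$, $y\in\overline\Omega$, one has $u_{1,\infty}(y)-\tfrac1s|x-y|\le u_{1,\infty}(x)+(\lambda_{1,\infty}-\tfrac1s)|x-y|\le\tfrac{\lambda_{1,\infty}}{\beta}\le\tfrac1{\beta s}$ and $C_{i,s}(y)-\tfrac1s|x-y|\le C_{i,s}(x)\le\tfrac1{\beta s}$, so the supremum over $y$ is $\le\tfrac1{\beta s}$; thus $\mathcal R_\infty(F_i)\le1/s$. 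Now follow the path that runs successively along the segments joining $u_{1,\infty}$ to $F_1$ and $F_1$ to $C_{1,s}$, then along $v_t:=\big((1-t)C_{1,s}-tC_{2,s}\big)/\max\{1-t,t\}$ from $C_{1,s}$ to $-C_{2,s}$, and finally along the odd reflections joining $-C_{2,s}$ to $-F_2$ and $-F_2$ to $-u_{1,\infty}$. On each affine segment the $L^\infty(\Omega)$ norm stays equal to $1$ (the value $1$ is attained at $x_0$, resp.\ $x_i$, and the function is dominated by $F_i\le1$), the gradient stays $\le1/s$ by convexity, and the boundary trace stays $\le\tfrac1{\beta s}$; along $v_t$ the supports $\overline{B(x_i,s)}\cap\overline\Omega$ are essentially disjoint since $|x_1-x_2|\ge2s$, so $\|(1-t)C_{1,s}-tC_{2,s}\|_{L^\infty(\Omega)}=\max\{1-t,t\}$, giving $\|v_t\|_{L^\infty(\Omega)}=1$, $\|\nabla v_t\|_{L^\infty(\Omega)}\le1/s$ and $\beta\|v_t\|_{L^\infty(\partial\Omega)}\le\tfrac1s$. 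Hence $\mathcal R_\infty\le1/s$ along the whole path, which belongs to $\Gamma$; therefore $L_\infty\le1/s(\Omega)=\lambda_{2,\infty}(\Omega)$, and with Step~1 this gives $L_\infty=\lambda_{2,\infty}(\Omega)$.

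The \emph{main obstacle} is Step~1, and precisely the endpoint surgery in $\Gamma_p$: the recovery path must terminate at $\pm u_{1,p}$ rather than at $\pm u_{1,\infty}$, which forces one to control $\mathcal R_p$ along the connecting segments uniformly in $p$, using the quantitative first--eigenfunction information $u_{1,p}\to u_{1,\infty}$ and $\lambda_{1,p}^{1/p}\to\lambda_{1,\infty}$ from \cite{AMNT}, and to prove the uniform $L^p$--renormalization estimate on an equi--Lipschitz path — this last point is exactly where the Lipschitz regularity of $\Omega$ enters, consistently with the hypothesis of the theorem. The path in Step~2 is elementary once the dominating functions $F_i$ are introduced; the only subtlety there is that $x_1,x_2$ must be chosen as the optimal pair in \eqref{ess}, so that the slope bound and the boundary bound hold simultaneously.
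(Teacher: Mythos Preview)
Your argument is largely correct, but both halves take a route quite different from the paper's, and your self-assessment of where the difficulty lies is inverted relative to what actually happens.

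\textbf{Lower bound $\lambda_{2,\infty}\le L_\infty$.} The paper works entirely at the $\infty$--level and never returns to the $p$--problem: for any $\gamma\in\Gamma$, continuity of $t\mapsto\|\gamma(t)_+\|_{L^\infty}-\|\gamma(t)_-\|_{L^\infty}$ gives a point $u$ on the path with $\|u_+\|_\infty=\|u_-\|_\infty=1$; then on each nodal domain $N^\pm$ one has $\mathcal R_\infty(u)\ge\lambda_\infty^{\text{mix}}(N^\pm)$ by \eqref{mixcarvar}, and the geometric inequality \eqref{mixmaggiore} (already established inside the proof of Theorem~\ref{autolimdiri}) yields $\max\{\lambda_\infty^{\text{mix}}(N^+),\lambda_\infty^{\text{mix}}(N^-)\}\ge1/s(\Omega)$. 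This is a two--line argument once Theorem~\ref{autolimdiri} is in hand. Your route via a recovery path in $\Gamma_p$ and endpoint surgery is valid in spirit, but it imports an ingredient the paper never states (the mountain--pass formula for $\lambda_{2,p}$ with Robin conditions, in place of the genus formula \eqref{var}) and leaves one point unchecked: for the concatenated path to lie in $\Gamma_p$ it must be continuous in $W^{1,p}$, whereas $\Gamma$ is only a family of $L^\infty$--continuous paths on $\mathcal S$; uniform Lipschitz control on $\gamma$ does not by itself give $L^p$--continuity of $t\mapsto\nabla\gamma(t)$. The paper's direct argument sidesteps this completely, which is why what you call the ``main obstacle'' simply does not arise there.

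\textbf{Upper bound $L_\infty\le\lambda_{2,\infty}$.} Here the two approaches are close cousins. The paper glues six pieces built from $\max$/$\min$ of $u_{1,\infty}$ and the cones, e.g.\ $\gamma_1(t)=\max\{u_{1,\infty},tC_1\}$, so that the value $1$ is always attained at the maximum point of $u_{1,\infty}$ or at $x_i$ without any auxiliary function. Your construction inserts the sup--convolutions $F_i$ so that linear interpolation stays on $\mathcal S$, and your verification of the gradient and boundary bounds along the path is correct. Both constructions yield the same bound $\sup_\gamma\mathcal R_\infty\le1/s$; the paper's is a bit shorter because $\max/\min$ preserve both the Lipschitz constant and the boundary bound without the extra estimate $u_{1,\infty}(y)-\tfrac1s|x-y|\le\tfrac1{\beta s}$ that you need for $F_i$.
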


Finally, for a particular class of domains, we show that $\lambda_{2,\infty}(\Omega)$ is the second eigenvalue of the $\infty$-Laplacian. To better understand the meaning of "eigenvalues of the $\infty$-Laplacian", we refer to Section \ref{section_notion} and problem \eqref{prob:visc}, which has to be understood in a viscosity sense. 
We then prove the following theorem.
\begin{teorema}\label{secondodavvero}
    Let $\Omega$ be a bounded and convex set of class $C^2$, and let $u$ be a viscosity solution to \eqref{prob:visc} for some $\lambda>0$. Assume that $u$ has at least two nodal domains, then
    $$\lambda\ge \lambda_{2,\infty}(\Omega).$$
\end{teorema}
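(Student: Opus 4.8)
The plan is to start from a viscosity solution $u$ of the $\infty$-Laplacian eigenvalue problem \eqref{prob:visc} having at least two nodal domains, and to extract from it a test function (or better, a test \emph{path}) that is admissible in the variational characterization of Theorem~\ref{var:carteorema}, or directly estimate the geometric quantity $s(\Omega)$ from Theorem~\ref{autolimdiri}. Denote by $\Omega_+$ and $\Omega_-$ two distinct nodal domains of $u$, say $u>0$ on $\Omega_+$ and $u<0$ on $\Omega_-$ (the signs can be chosen after rescaling). The first step is to show that, on each nodal domain $\Omega_i$, the restriction $u_i := u\,\mathds{1}_{\Omega_i}$ is (a multiple of) a first-eigenfunction-type object for the Robin $\infty$-Laplacian relative to that region, so that the eigenvalue $\lambda$ controls simultaneously $\norma{\nabla u_i}_{L^\infty}$ and the boundary term $\beta\norma{u_i}_{L^\infty(\partial\Omega\cap\partial\Omega_i)}$. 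Concretely, I expect the viscosity inequalities in \eqref{prob:visc}, read on $\Omega_i$ with the natural homogeneous condition on the free boundary $\partial\Omega_i\cap\Omega$ (where $u=0$), to force the bound $|\nabla u_i|\le \lambda |u_i|$ inside and $\frac{\partial u_i}{\partial\nu}\le -\beta u_i$ on the Robin part — exactly as in the analysis leading to \eqref{ciao2} and \eqref{lambda1} for the first eigenvalue.

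The second step is geometric. After normalizing $\max_{\Omega_i}|u| = 1$ on each of the two nodal domains, the cone-comparison principle for the $\infty$-Laplacian (the standard tool: an $\infty$-subharmonic/superharmonic function is squeezed between cones) gives, from the interior gradient bound $|\nabla u_i|\le\lambda|u_i|$ and the boundary condition, that $u_i$ lies above the cone $C_{i,t_i}$ with slope $1/t_i$ for an appropriate $t_i$, and that $t_i\ge 1/\lambda$ essentially because the point $x_i$ where $|u|$ attains its maximum sits at distance $\ge$ (something like $\tfrac1\beta$ correction $+$ inradius-type term) from $\partial\Omega$. More precisely, I would show that the two maximum points $x_1\in\overline{\Omega_+}$, $x_2\in\overline{\Omega_-}$ satisfy $|x_1-x_2|\ge 2t$ with $t = 1/\lambda$, because the cones emanating downward from $x_1$ and from $x_2$ with slope $\lambda$ cannot overlap (they would have to meet at a zero of $u$, but the triangle-inequality/slope bound forbids two disjoint-support cones of slope $\lambda$ centered closer than $2/\lambda$), and that the Robin condition yields $\norma{C_{i,t}}_{L^\infty(\partial\Omega)}\le \frac{1}{\beta t}$ for that same $t$. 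These are exactly the two requirements in the definition \eqref{ess} of $s(\Omega)$, so we conclude $s(\Omega)\ge t = 1/\lambda$, i.e. $\lambda\ge 1/s(\Omega) = \lambda_{2,\infty}(\Omega)$.

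I expect the main obstacle to be the rigorous passage from the \emph{viscosity} PDE on $\Omega$ to the pointwise/geometric cone estimates on the nodal domains: one must justify that a viscosity solution of \eqref{prob:visc} restricted to a nodal domain still satisfies the relevant one-sided viscosity inequalities up to the free boundary where $u$ vanishes, and that the $C^2$-convexity of $\Omega$ is used to propagate the boundary Robin inequality into an honest bound on $\norma{C_{i,t}}_{L^\infty(\partial\Omega)}$ (this is where convexity enters — it guarantees that the ball $B(x_i,t)\cap\Omega$ meets $\partial\Omega$ in a controlled way and that the cone restricted to $\partial\Omega$ does not exceed $1/(\beta t)$). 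A secondary technical point is handling the case where a nodal domain touches $\partial\Omega$ in a degenerate way, or where $|\nabla u|$ vanishes, both of which are dealt with by the usual doubling-of-variables / sup-convolution regularization arguments standard in the $\infty$-Laplacian literature. Convexity and $C^2$-regularity should also be what lets us avoid pathological nodal configurations and guarantees the distance estimate $|x_1 - x_2|\ge 2/\lambda$ is sharp enough to match $s(\Omega)$.
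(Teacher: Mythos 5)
Your global strategy --- work nodal domain by nodal domain, obtain distance-type bounds for $u$ there, locate the maximum points and check that the corresponding cones are admissible in the definition \eqref{ess} of $s(\Omega)$ --- is essentially the route the paper takes, except that the paper packages the final geometric step through the mixed Dirichlet--Robin eigenvalues $\lambda_\infty^{\text{mix}}(\Omega^\pm)$ of \cite{RS} and the inequality \eqref{mixmaggiore} already established in the proof of Theorem \ref{autolimdiri}. However, the analytic core of your argument is wrong as stated. The viscosity relation $\min\{|\nabla u|-\lambda u,\,-\Delta_\infty u\}=0$ does not force $|\nabla u_i|\le \lambda |u_i|$ inside a nodal domain: the supersolution half of the definition gives the \emph{opposite} inequality $|\nabla\varphi|-\lambda\varphi\ge 0$ at touching points, and the subsolution half only yields an alternative between two inequalities, never a pointwise gradient bound for $u$ itself; likewise the boundary operator $G_\beta$ does not reduce to $\partial u/\partial\nu\le-\beta u$ on $\partial\Omega^+\cap\partial\Omega$. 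Since both your cone-comparison step and the distance estimate $|x_1-x_2|\ge 2/\lambda$ are fed by these unavailable pointwise inequalities, the proposal has no working mechanism to produce the needed estimates; note also that what is required is that $u$ lie \emph{below} suitable comparison functions (so that its maximum point is forced far from the free boundary and sufficiently far from $\partial\Omega$), not that it lie above a cone.

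What actually closes this gap in the paper is a pair of comparison arguments with explicit perturbed supersolutions: after normalizing $\max_{\Omega^+}u=1/\lambda$, one compares with $g_{\varepsilon,\gamma}(x)=(1+\varepsilon)|x-x_0|-\gamma|x-x_0|^2$ for $x_0\in\overline{\Omega^-}$ (as in Proposition 1 of \cite{EKNT}) to get $u(x)\le |x-x_0|$, hence $u\le d(\cdot,\Gamma_1^+)$, and with $h_{\varepsilon,\gamma}(x)=\frac{1}{\beta}+(1+\varepsilon)d(x,\partial\Omega^+)-\gamma\, d(x,\partial\Omega^+)^2$ (as in Theorem 3.6 of \cite{AMNT}) to get $u\le \frac{1}{\beta}+d(\cdot,\Gamma_2^+)$; it is precisely in this second comparison that the $C^2$ convexity of $\Omega$ enters, not in controlling how $B(x_i,t)$ meets $\partial\Omega$. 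Plugging $\min\{d(\cdot,\Gamma_1^+),\frac{1}{\beta}+d(\cdot,\Gamma_2^+)\}$ into the mixed characterization \eqref{mixcarvar} then gives $\lambda\ge\lambda_\infty^{\text{mix}}(\Omega^\pm)$, and \eqref{mixmaggiore} yields $\lambda\ge 1/s(\Omega)=\lambda_{2,\infty}(\Omega)$. If you replace your pointwise gradient and normal-derivative claims by these comparison arguments (or by the geometric argument behind \eqref{mixmaggiore}), your outline becomes a correct proof along the paper's lines.
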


Unfortunately, for a general Lipschitz set, it is still unclear whether $\lambda_{2,\infty}$ is exactly the second eigenvalue or not.

\vspace{5mm}

This paper continues a series of papers on this topic.
Similar results were obtained in the case of Dirichlet, Robin, and Neumann boundary conditions in \cite{BDM,EKNT,JL,JLM, KN,RS2} and for the Steklov eigenvalue in \cite{gloria, stek}.

To be more precise, in \cite{JL,JLM}, Juutinen, Lindqvist, and Manfredi studied the Dirichlet case as $p \rightarrow +\infty$. They provided a complete characterization of the limiting problems in terms of geometric quantities. 
Indeed, the first two eigenvalues of the $p$-Laplace operator, $\lbrace\lambda_{1,p}^D\rbrace$ and $\lbrace\lambda_{2,p}^D\rbrace$, satisfy
$$
\lim_{p\to \infty} \left(\lambda_{1,p}^D\right)^{{1}/{p}} = \lambda_{1, \infty}^D=: \frac{1}{r(\Omega)} \quad\text{ and }\quad \lim_{p\to \infty} \left(\lambda_{2,p}^D\right)^{{1}/{p}} = \lambda_{2,\infty}^D=: \frac{1}{r_2(\Omega)},
$$
where
$$r(\Omega)=\sup\{t: \exists B_t(x)\subseteq\Omega\},$$
\begin{center}
    and
\end{center}
$$r_2(\Omega)=\sup\{t: \exists B_t(x_1), B_t(x_2)\subseteq\Omega \text{ such that } B_t(x_1)\cap B_t(x_2)=\emptyset \}.$$

The associated eigenfunctions $v^D_{1,p}$ and $v^D_{2,p}$ also converge (up to a subsequence) to some Lipschitz functions $v^D_{1,\infty}$  and $v^D_{2,\infty}$. These two functions are the first and the second eigenfunctions of a natural viscosity formulation of the eigenvalue problem for the $\infty$-Laplacian (in the sense of Definition \ref{def:viscsol}). 

The Neumann case appears to be more involved. It was investigated in \cite{EKNT, RS2} and similarly to the Dirichlet case, the authors established that the first nontrivial eigenvalues of the $p$-Laplacian $\lbrace\lambda^N_p \rbrace$ satisfy
$$\lim_{p\to \infty}\left(\lambda^N_p\right)^{1/p} = \lambda_\infty^N := \frac{2}{D(\Omega)},$$
where $D(\Omega)$ is the intrinsic diameter of $\Omega$, i.e., the supremum of the geodesic distance between two points of $\Omega$.

However, as in Theorem \ref{secondodavvero}, they were able to show that the sequence of nontrivial first eigenvalues converges to the \emph{first} eigenvalue and the \emph{first} eigenfunction under the additional assumption of convexity on $\Omega$. Whether or not the same applies in the general case is still an open question.

Finally, in \cite{RS}, the authors examined the case of mixed Dirichlet and Robin boundary conditions.
This is a key point in our analysis. In proving our results, we will show that the limit of the second eigenfunctions changes sign, this implies that, on each nodal domain, this limit solves an eigenvalue problem with mixed Dirichlet and Robin boundary conditions.

More precisely,  the authors considered $\Omega$ to be a Lipschitz, bounded, open, and connected set, whose boundary is divided into two parts 
$$
\partial \Omega= \Gamma_1 \cup \Gamma_2.
$$
On $\Gamma_1$ they considered the Dirichlet boundary conditions, while on $\Gamma_2$ the Robin ones. They proved that the sequence of the first mixed eigenvalues converges to 
\begin{equation}\label{rossi}
    \lambda_\infty^{\text{mix}}(\Omega)=\begin{cases}
        \displaystyle{\min_{x\in \Omega} \frac{1}{d(x,\Gamma_1)}} & \text{if } \mathcal{A}=\emptyset,\\[2ex]
        \displaystyle{\min_{x\in \mathcal{A}} \frac{1}{\frac{1}{\beta}+d(x,\Gamma_2)}} & \text{if } \mathcal{A}\neq\emptyset,
    \end{cases}
\end{equation}
where the set $\mathcal{A}$ is defined as follows

$$\mathcal{A}= \left\{x\in \overline{\Omega} : d(x, \Gamma_1)\ge \frac{1}{\beta}+d(x,\Gamma_2)\right\}.$$

The authors also proved that 
\begin{equation}
    \label{mixcarvar}
\lim_{p\to\infty}\left( \lambda_p^{\text{mix}}(\Omega)\right)^{1/p}= \lambda_\infty^{\text{mix}}(\Omega)=\inf_{u\in X} \max\left\{{\norma{\nabla u}_{L^\infty(\Omega)}, \beta \norma{u}_{L^\infty(\Gamma_2)}}\right\},
\end{equation}
where 
\begin{equation}
    \label{def:spacex}
    X=\left\{u\in W^{1,\infty}(\Omega) : u=0 \text{ on } \Gamma_1, \norma{u}_{L^\infty(\Omega)}=1\right\}.
\end{equation}
{
A few final words on the connection between these three problems. From definition \eqref{ess} and Theorem \ref{autolimdiri}, we observe that the following inequality holds
$$\frac{2}{D(\Omega)}\le \lambda_{2,\infty}(\Omega)\le \frac{1}{r_2(\Omega)},$$
where the lower and upper bounds correspond exactly to the Neumann and Dirichlet second
 $\infty$-eigenvalues, respectively. 
Moreover, it can be proved that 

$$
\lim_{\beta\to +\infty}\lambda^\beta_{2,\infty}(\Omega)= \lambda^D_{2,\infty}(\Omega)=\frac{1}{r_2(\Omega)},
$$
while from Definition \eqref{ess}, one can notice that for $\displaystyle{\beta \leq \frac{2}{D(\Omega)}}$, we have $$\displaystyle{\lambda^\beta_{2,\infty}(\Omega)= \lambda_{\infty}^N(\Omega)= \frac{2}{D(\Omega)}}.$$ 

If we analyze the definition \eqref{ess} of $s(\Omega)$, the condition 
$$
\norma{C_{i, s(\Omega)}}_{L^\infty(\partial\Omega)}\le\frac{1}{\beta s(\Omega)}, 
$$
implies that the cones involved in this definition 
satisfy an "infinity Robin boundary conditions"
$$\beta\norma{C_{i, s(\Omega)}}_{L^\infty(\partial\Omega)}\le {\norma{\nabla C_{i, s(\Omega)}}_{L^\infty(\Omega)}}.$$

The latter condition becomes Dirichlet if $\beta$ goes to infinity, and Neumann for a small value of $\beta$.}

The paper is organized as follows: in Section \ref{section_notion} we introduce the notation and the preliminary results that we will need throughout the paper;  Section \ref{limit} is devoted to the study of the limit of the eigenfunction;  while in Sections \ref{geometric}, \ref{varcar_sec} and \ref{infty_eigenproblem}  we prove the main Theorems \ref{autolimdiri}, \ref{var:carteorema} and \ref{secondodavvero}, respectively.

 \section{Notations and Preliminaries}
\label{section_notion}
Throughout this article, $|\cdot|$ will denote the Euclidean norm in $\mathbb{R}^n$, and $\mathcal{H}^k(\cdot)$, for $k\in [0,n)$, will denote the $k$-dimensional Hausdorff measure in $\mathbb{R}^n$.

The following lemma allows us to understand the variational characterization in Theorem \ref{var:carteorema} better, and its proof can be found in \cite{RS}.
\begin{lemma}
Given $f, g \in W^{1,\infty}(\Omega)$, then 
$$
\lim_{p \to \infty} \left( \int_\Omega \abs{f}^p + \int_\Omega \abs{g}^p\right)^{1/p} = \max\left\lbrace \norma{f}_\infty , \norma{g}_\infty \right\rbrace.
$$
\end{lemma}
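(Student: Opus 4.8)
The plan is to prove the two inequalities separately, both resting on the monotonicity of $L^p$ norms on a finite measure space together with the fact that the $L^p$ norm of a fixed $L^\infty$ function converges to its $L^\infty$ norm as $p\to\infty$. Set $M=\max\{\norma{f}_\infty,\norma{g}_\infty\}$ and write $I_p=\left(\int_\Omega |f|^p+\int_\Omega|g|^p\right)^{1/p}$.

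For the upper bound, I would simply estimate $\int_\Omega|f|^p+\int_\Omega|g|^p\le 2M^p|\Omega|$, where $|\Omega|$ denotes the Lebesgue measure of $\Omega$, which is finite since $\Omega$ is bounded. Taking $p$-th roots gives $I_p\le (2|\Omega|)^{1/p}M$, and since $(2|\Omega|)^{1/p}\to 1$ we get $\limsup_{p\to\infty} I_p\le M$.

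For the lower bound, without loss of generality assume $M=\norma{f}_\infty$. Then $\int_\Omega|f|^p+\int_\Omega|g|^p\ge \int_\Omega|f|^p=\norma{f}_p^p$, so $I_p\ge \norma{f}_p$. Since $f\in W^{1,\infty}(\Omega)\subset L^\infty(\Omega)$ and $|\Omega|<\infty$, the classical fact $\lim_{p\to\infty}\norma{f}_p=\norma{f}_\infty$ applies (one direction from $\norma{f}_p\le |\Omega|^{1/p}\norma{f}_\infty$, the other by noting that on the set where $|f|>\norma{f}_\infty-\varepsilon$, which has positive measure, one has $\norma{f}_p\ge (\norma{f}_\infty-\varepsilon)\,|\{|f|>\norma{f}_\infty-\varepsilon\}|^{1/p}$ and letting $p\to\infty$ then $\varepsilon\to 0$). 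Hence $\liminf_{p\to\infty} I_p\ge \norma{f}_\infty=M$. Combining the two bounds yields $\lim_{p\to\infty}I_p=M$.

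There is essentially no obstacle here: the only mild subtlety is that the convergence $\norma{f}_p\to\norma{f}_\infty$ requires $|\Omega|<\infty$ (true by hypothesis) and requires the sublevel argument to handle the possibility that the essential supremum is not attained; the membership $f,g\in W^{1,\infty}(\Omega)$ is more than enough, giving boundedness directly. Since this is a known result quoted from \cite{RS}, I would keep the exposition brief and cite it for the delicate part if desired.
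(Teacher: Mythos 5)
Your proof is correct: the upper bound via $I_p\le(2|\Omega|)^{1/p}M$ and the lower bound via dropping one term and using $\norma{f}_p\to\norma{f}_\infty$ on the finite-measure set $\Omega$ constitute the standard squeeze argument, and every step is valid under the stated hypotheses. The paper itself gives no proof and simply cites \cite{RS} for this lemma, so your self-contained elementary argument is exactly the expected one and nothing further is needed.
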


\subsection{ The viscosity \texorpdfstring{$\infty$}{inf}-eigenvalue problem}

Before proceeding, we give the viscosity formulation of the eigenvalue problem for the $\infty$-Laplacian, see \cite{CIL} for more details.

\begin{definizione}\label{def:viscsol}
  Let us consider the following boundary value problem 
    \begin{equation}
    \label{def:visc}
        \begin{cases}
            F(x, u, \nabla u, \nabla^2u) =0 &\text{ in }\Omega, \\
            G(x,u,\nabla u)=0 &\text{ on }\partial \Omega,
        \end{cases}
    \end{equation}
    where $F:\R^n\times\R\times\R^n\times\R^{n\times n}\to \R$ and $B:\R^n\times \R\times \R^n\to\R$ are two continuous functions.
    \begin{description}
        \item[Viscosity supersolution] A lower semi-continuous function $u$ is a viscosity supersolution to \eqref{def:visc} if, whenever we fix $x_0\in \overline{\Omega}$, for every $\phi \in C^2(\overline{\Omega})$ such that $u(x_0)=\phi (x_0)$ and $x_0$ is a strict minimum in $\Omega$ for $u - \phi$, then
        \begin{itemize}
            \item if $x_0 \in \Omega$, the following holds
            $$
            F\left(x_0,\phi(x_0), \nabla\phi(x_0) ,\nabla^2\phi(x_0)\right) \geq 0
            $$
            \item if $x_0 \in \partial \Omega$, the following holds
            $$
            \max\Set{F\left(x_0,\phi(x_0),\nabla\phi(x_0) ,\nabla^2\phi(x_0)\right), G\left(x_0, \phi(x_0), \nabla\phi(x_0)\right)}\geq 0
            $$
        \end{itemize}
        \item[Viscosity subsolution] An upper semi-continuous function $u$ is a viscosity subsolution to \eqref{def:visc} if,  whenever we fix $x_0\in \overline{\Omega}$, for every $\phi \in C^2(\overline{\Omega})$ such that $u(x_0)=\phi (x_0)$ and  $x_0$ is a strict maximum in $\Omega$ for $u - \phi$, then
        \begin{itemize}
            \item if $x_0 \in \Omega$, the following holds
            $$
            F\left(x_0,\phi(x_0), \nabla\phi(x_0) ,\nabla^2\phi(x_0)\right) \leq 0
            $$
            \item if $x_0 \in \partial \Omega$, the following holds
            $$
            \min\Set{F\left(x_0,\phi(x_0), \nabla\phi(x_0) ,\nabla^2\phi(x_0)\right), G\left(x_0, \phi(x_0), \nabla\phi(x_0)\right)}\leq 0
            $$
        \end{itemize}
         \item[Viscosity solution] A continuous function $u$ is a viscosity solution to \eqref{def:visc} if it is both a super and subsolution.
    \end{description}
\end{definizione}

Hence, we can give the following definition for the $\infty$-eigenvalue problem.
\begin{definizione}
    We say that a function $u\in C(\overline{\Omega})$ is an $\infty$-eigenfunction if there exists $\lambda\neq 0$ such that
    \begin{equation}\label{prob:visc}
        \begin{cases}
            F_{\lambda}(u, \nabla u, \nabla^2 u)=0 & \text{ in } \Omega\\
            G_\beta(u,\nabla u)=0 & \text{ on } \partial \Omega,
        \end{cases}
    \end{equation}

     in the viscosity sense.
    The function appearing in problem \eqref{prob:visc} are the following

\begin{equation*}
F_{\lambda}(u, \nabla u, \nabla^2u)=
\begin{dcases}
{\rm {min}}  \{| \nabla u| - \lambda  |u|, -\Delta_{\infty} u\} & \mbox{in}\ \{u >0\}\cap\Omega\\[1ex]
{\rm {max}} \{  \lambda  |u| -| \nabla u|, -\Delta_{\infty} u\} & \mbox{in}\ \{u <0\}\cap\Omega\\[1ex]
-\Delta_{\infty} u,& \mbox{in} \ \{u =0\}\cap\Omega,
\end{dcases}
\end{equation*}

and 
\begin{equation*}
G_{\beta}(u, \nabla u)=
\begin{dcases}
-{\rm {min}}  \left\{| \nabla u| - \beta u, -\frac{\partial u }{\partial \nu}\right\} & \mbox{in}\ \{u >0\}\cap\partial\Omega\\
-{\rm {max}} \left\{ \beta\abs{u}-| \nabla u|, -\frac{\partial u }{\partial \nu}\right\} & \mbox{in}\ \{u <0\}\cap\partial\Omega\\
\frac{\partial u }{\partial \nu},& \mbox{in} \ \{u =0\}\cap\partial\Omega.
\end{dcases}
\end{equation*}
In this case, the number $\lambda$ is an $\infty$-eigenvalue.
\end{definizione}

\subsection{Properties of the \texorpdfstring{$p$}{f}-Laplace eigenvalues}

We say that a function $u\in W^{1,p}(\Omega)$, $u\neq0$, is a $p$-eigenfunction if there exists a $\lambda>0$ such that
\begin{equation}\label{eq:debole}
  \int _\Omega \abs{\nabla u}^{p-2} \nabla u \nabla \varphi \, dx+ \beta^p \int_{\partial\Omega}\abs{u}^{p-2}u\varphi \, d \mathcal{H}^{n-1} = \lambda\int_{\Omega} \abs{u}^{p-2} u \varphi \, dx, 
\end{equation}
for all $\varphi \in W^{1,p}(\Omega)$,
where $\beta$ is positive. It is possible to define a diverging sequence of eigenvalues, and we need the following definition. 

\begin{definizione}
    Let $E$ be a Banach space and let $A\subset E$ be any closed symmetric set ($v\in A$ implies $-v\in A$). The \emph{genus} $\gamma(A)$ of the set $A$ is defined as the smallest integer $m$ for which there exists a continuous odd mapping $\phi:A\to \R^m\setminus \{0\}$. If no such integer exists, then $\gamma(A)=\infty$. 
\end{definizione}

We define
\begin{equation*}
    \Sigma_k=\{A\subset W^{1,p}(\Omega) \text{ such that }  \gamma(A)\ge k \text{ and } \{v\in A: \norma{v}_p=1\} \text{ is compact }\}.
\end{equation*}
The numbers
\begin{equation}\label{var}
    \lambda_{k,p}(\Omega)= \inf_{A\in \Sigma_k} \sup_{w\in A} \frac{\displaystyle{\int_\Omega \abs{\nabla w}^p\, dx+\beta^p \int_{\partial\Omega} \abs{w}^p\, d\mathcal{H}^{n-1}}}{\displaystyle{\int_\Omega \abs{w}^p\, dx}}
 \end{equation}
are $p$-eigenvalues and create an increasing sequence going to infinity as $k$ diverges. For $p\neq 2$ it is not known whether this sequence contains all the $p$-eigenvalues, but it gives the correct $\lambda_{1,p}(\Omega)$ and $\lambda_{2,p}(\Omega)$, see for instance \cite{spectrump}.

\section{Limit of the second eigenfunctions and its properties}
\label{limit}
As first step, we prove part of Theorem \ref{autolimdiri}. To this end, we recall the definition of $s(\Omega)$
\begin{equation*}
   s(\Omega)=\max \left\{t: \exists x_1, x_2\in \overline{\Omega} \text{ such that }  \abs{x_1-x_2}\ge 2t, \norma{C_{i,t}}_{L^\infty(\partial\Omega)}\le\frac{1}{\beta t}, \, i=1,2 \right\},
\end{equation*} 
where $C_{i,t}$ is the function
$$C_{i,t}(x)=\frac{(t-\abs{x-x_i})_+}{t}.$$
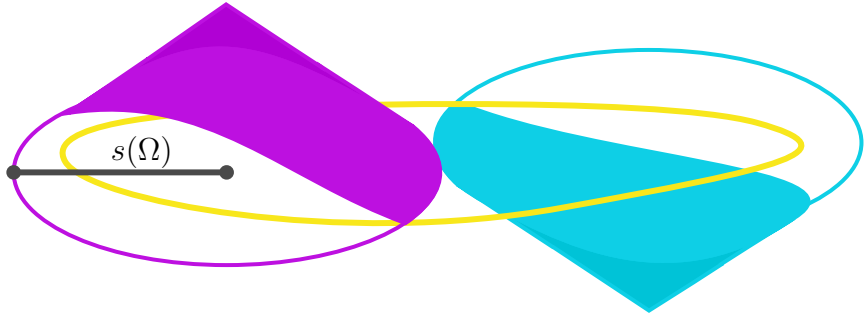
\begin{figure}
    \centering
\tikzset{every picture/.style={line width=0.75pt}} 
\begin{tikzpicture}[x=0.75pt,y=0.75pt,yscale=-1,xscale=1]

\draw [color={rgb, 255:red, 14; green, 207; blue, 230 }  ,draw opacity=1 ][fill={rgb, 255:red, 0; green, 195; blue, 215 }  ,fill opacity=0.66 ][line width=1.5]    (347.5,117.44) .. controls (442.5,180.36) and (347.5,117) .. (443,180.8) .. controls (515.5,134.6) and (442.5,180.36) .. (515.5,134.16) .. controls (457.5,174.2) and (404.5,153.08) .. (347.5,117.44) -- cycle ;
\draw [color={rgb, 255:red, 14; green, 207; blue, 230 }  ,draw opacity=1 ][fill={rgb, 255:red, 14; green, 207; blue, 230 }  ,fill opacity=0.19 ][line width=1.5]    (347.5,117.44) .. controls (332.5,103.8) and (332.5,87.08) .. (347.5,76.08) .. controls (415.5,104.68) and (555.5,109.08) .. (515.5,134.16) .. controls (457.5,174.2) and (404.5,153.08) .. (347.5,117.44) -- cycle ;
\draw  [color={rgb, 255:red, 14; green, 207; blue, 230 }  ,draw opacity=1 ][line width=1.5]  (336.5,96.32) .. controls (336.5,70.57) and (384.07,49.68) .. (442.75,49.68) .. controls (501.43,49.68) and (549,70.57) .. (549,96.32) .. controls (549,122.08) and (501.43,142.96) .. (442.75,142.96) .. controls (384.07,142.96) and (336.5,122.08) .. (336.5,96.32) -- cycle ;
\draw  [color={rgb, 255:red, 248; green, 231; blue, 28 }  ,draw opacity=1 ][line width=2.25]  (229.1,80.03) .. controls (251.03,77.48) and (445.3,72.12) .. (496.69,86.29) .. controls (548.09,100.46) and (508.71,109.28) .. (398.82,129.22) .. controls (288.93,149.15) and (163.1,125.72) .. (151.7,105.84) .. controls (140.29,85.96) and (207.18,82.58) .. (229.1,80.03) -- cycle ;
\draw  [color={rgb, 255:red, 189; green, 16; blue, 224 }  ,draw opacity=1 ][line width=1.5]  (126,111.28) .. controls (126,85.53) and (173.57,64.64) .. (232.25,64.64) .. controls (290.93,64.64) and (338.5,85.53) .. (338.5,111.28) .. controls (338.5,137.04) and (290.93,157.92) .. (232.25,157.92) .. controls (173.57,157.92) and (126,137.04) .. (126,111.28) -- cycle ;
\draw [color={rgb, 255:red, 74; green, 74; blue, 74 }  ,draw opacity=1 ][line width=2.25]    (232.25,111.28) -- (126,111.28) ;
\draw [shift={(126,111.28)}, rotate = 180] [color={rgb, 255:red, 74; green, 74; blue, 74 }  ,draw opacity=1 ][fill={rgb, 255:red, 74; green, 74; blue, 74 }  ,fill opacity=1 ][line width=2.25]      (0, 0) circle [x radius= 2.14, y radius= 2.14]   ;
\draw [shift={(232.25,111.28)}, rotate = 180] [color={rgb, 255:red, 74; green, 74; blue, 74 }  ,draw opacity=1 ][fill={rgb, 255:red, 74; green, 74; blue, 74 }  ,fill opacity=1 ][line width=2.25]      (0, 0) circle [x radius= 2.14, y radius= 2.14]   ;
\draw [color={rgb, 255:red, 189; green, 16; blue, 224 }  ,draw opacity=1 ][fill={rgb, 255:red, 176; green, 1; blue, 212 }  ,fill opacity=0.68 ][line width=1.5]    (153.5,81.36) .. controls (232.5,26.8) and (153.5,81.8) .. (232,26.8) .. controls (326.5,89.8) and (232.5,26.8) .. (325.5,88.4) .. controls (245.5,41.8) and (221.5,32.8) .. (153.5,81.36) -- cycle ;
\draw [color={rgb, 255:red, 189; green, 16; blue, 224 }  ,draw opacity=1 ][fill={rgb, 255:red, 189; green, 16; blue, 224 }  ,fill opacity=0.31 ][line width=1.5]    (153.5,81.36) .. controls (222.5,70.8) and (261.5,114.8) .. (321.5,136.8) .. controls (351.5,115.8) and (335.5,96.8) .. (325.5,88.4) .. controls (245.5,41.8) and (221.5,32.8) .. (153.5,81.36) -- cycle ;

\draw (173,91.6) node [anchor=north west][inner sep=0.75pt]   [align=left] {$s(\Omega)$};

\end{tikzpicture}

    \caption{Definition of $s(\Omega)$.}
    \label{fig:enter-label}
\end{figure}

\begin{prop}\label{autolimdirimezzo}
    Let $\Omega$ be a bounded, {Lipschitz} set, and let $\{\lambda_{2,p}(\Omega)\}$ be the sequence of the second $p$-Laplace eigenvalues. Then, we have

    \begin{equation*}
        \limsup_{p\to \infty} \left(\lambda_{2,p}(\Omega)\right)^\frac{1}{p}\le \frac{1}{s(\Omega)}.
    \end{equation*}
\end{prop}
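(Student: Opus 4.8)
The plan is to construct, for each large $p$, a competitor set $A \in \Sigma_2$ built from two bump-like functions centered at the points $x_1, x_2$ realizing $s(\Omega)$, and to estimate the Rayleigh quotient in \eqref{var} along this set, so that its supremum is at most $(s(\Omega))^{-p}$ up to lower-order factors. Concretely, let $t = s(\Omega)$ and let $x_1, x_2 \in \overline{\Omega}$ be such that $|x_1 - x_2| \ge 2t$ and $\norma{C_{i,t}}_{L^\infty(\partial\Omega)} \le \tfrac{1}{\beta t}$ for $i = 1, 2$. Since $|x_1 - x_2| \ge 2t$, the supports of $C_{1,t}$ and $C_{2,t}$ (contained in $B(x_1,t)$ and $B(x_2,t)$ respectively) have disjoint interiors, so the two functions have disjoint supports up to a null set. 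Define $A = \{ a\, C_{1,t} + b\, C_{2,t} : (a,b) \in \R^2,\ \norma{a C_{1,t} + b C_{2,t}}_{L^p(\Omega)} = 1\}$ after an appropriate normalization; more robustly, take $A$ to be the image of the unit circle in $\operatorname{span}(C_{1,t}, C_{2,t})$, which is a closed symmetric set homeomorphic to $S^1$, hence of genus $2$, and whose $L^p$-normalized slice is compact (it is a finite-dimensional set). Thus $A \in \Sigma_2$ is admissible in the variational characterization of $\lambda_{2,p}(\Omega)$.

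The next step is the Rayleigh-quotient estimate. For $w = a C_{1,t} + b C_{2,t}$ with disjoint supports, both numerator and denominator split as sums over the two pieces:
\begin{equation*}
\frac{\int_\Omega |\nabla w|^p + \beta^p \int_{\partial\Omega} |w|^p}{\int_\Omega |w|^p} = \frac{|a|^p \big(\int_\Omega |\nabla C_{1,t}|^p + \beta^p \int_{\partial\Omega} |C_{1,t}|^p\big) + |b|^p \big(\int_\Omega |\nabla C_{2,t}|^p + \beta^p \int_{\partial\Omega} |C_{2,t}|^p\big)}{|a|^p \int_\Omega |C_{1,t}|^p + |b|^p \int_\Omega |C_{2,t}|^p}.
\end{equation*}
A ratio of the form $(|a|^p X_1 + |b|^p X_2)/(|a|^p Y_1 + |b|^p Y_2)$ is bounded above by $\max\{X_1/Y_1,\, X_2/Y_2\}$, so it suffices to bound each single-cone Rayleigh quotient. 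For a single cone $C_{i,t}$ one has $|\nabla C_{i,t}| = 1/t$ a.e. on its support and $\norma{C_{i,t}}_{L^\infty(\partial\Omega)} \le 1/(\beta t)$, hence $\beta^p |C_{i,t}|^p \le t^{-p}$ pointwise on $\partial\Omega$; therefore
\begin{equation*}
\frac{\int_\Omega |\nabla C_{i,t}|^p + \beta^p \int_{\partial\Omega} |C_{i,t}|^p}{\int_\Omega |C_{i,t}|^p} \le \frac{t^{-p}\, |\Omega \cap B(x_i,t)| + t^{-p}\, \mathcal{H}^{n-1}(\partial\Omega \cap B(x_i,t))}{\int_\Omega |C_{i,t}|^p} =: t^{-p} \cdot R_{i,p}.
\end{equation*}
Taking $p$-th roots, $R_{i,p}^{1/p} \to 1$ as $p \to \infty$: the numerator of $R_{i,p}$ is bounded by a constant independent of $p$ (total measure plus perimeter), while the denominator $\int_\Omega |C_{i,t}|^p$ decays only polynomially in $1/p$ — indeed $C_{i,t}$ is comparable to $1$ on a fixed-volume region near $x_i$ inside $\Omega$ (here one uses that $x_i \in \overline{\Omega}$ together with a nondegeneracy of $\Omega$ near $x_i$ coming from the Lipschitz hypothesis, giving $|\{x \in \Omega : C_{i,t}(x) > 1 - \varepsilon\}| > 0$ for all small $\varepsilon$), so $\big(\int_\Omega |C_{i,t}|^p\big)^{1/p} \to \norma{C_{i,t}}_{L^\infty(\Omega)} = 1$. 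Combining, $\sup_{w \in A} (\text{Rayleigh quotient})^{1/p} \le t^{-1} \max_i R_{i,p}^{1/p} \to t^{-1} = 1/s(\Omega)$, and since $\lambda_{2,p}(\Omega) \le \sup_{w\in A}(\cdots)$ we get $\limsup_p (\lambda_{2,p}(\Omega))^{1/p} \le 1/s(\Omega)$.

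I expect the main obstacle to be the lower bound $\big(\int_\Omega |C_{i,t}|^p\big)^{1/p} \to 1$ when $x_i \in \partial\Omega$ rather than in the interior: one must ensure that $\Omega$ occupies a nonnegligible solid portion of every small ball around $x_i$, so that the $L^\infty$ norm of $C_{i,t}$ over $\Omega$ is genuinely $1$ and is asymptotically achieved by the $L^p$ norms. This is exactly where the Lipschitz regularity of $\partial\Omega$ enters, and it should be stated as a lemma (a Lipschitz domain satisfies a uniform interior-cone/measure-density condition: $|\Omega \cap B(x,\rho)| \ge c\, \rho^n$ for $x \in \overline\Omega$ and $\rho$ small). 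A secondary, purely bookkeeping point is verifying that $A$ as defined is closed, symmetric, has genus exactly $2$ (use that an odd map $S^1 \to \R^1 \setminus \{0\}$ cannot exist by connectedness, while the identity-type map into $\R^2$ shows $\gamma \le 2$) and that its $L^p$-unit slice is compact — all immediate since $A$ lies in a two-dimensional subspace. Care is also needed in the degenerate case $a = 0$ or $b = 0$, where the Rayleigh quotient reduces to a single cone and the bound still holds; and one should note the supports are disjoint up to an $\mathcal{H}^{n-1}$-null set on $\partial\Omega$ as well, which is enough for the integral splitting.
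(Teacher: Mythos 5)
Your proposal is correct and follows essentially the same route as the paper: both build the two-dimensional competitor spanned by the cones $C_{1,t}$, $C_{2,t}$ at the points realizing $s(\Omega)$, plug it into the genus-based characterization \eqref{var}, bound the split Rayleigh quotient by the worse of the two single-cone quotients, and pass to the limit in $p$. If anything, you are more careful than the paper on two points it leaves implicit — the genus-$2$ verification for the competitor set and the fact that $\bigl(\int_\Omega |C_{i,t}|^p\bigr)^{1/p}\to \norma{C_{i,t}}_{L^\infty(\Omega)}=1$ even when $x_i\in\partial\Omega$, which is where the Lipschitz measure-density condition is genuinely used.
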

\begin{proof}
Let us denote by $s:=s(\Omega)$, and let $B_1:=B(x_1, s)$ and $B_2:=B(x_2,s)$ be two disjoint balls with center in $\Omega$.
 We consider the functions
    $$C_i(x)=\frac{(s-\abs{x-x_i})_+}{s}, \quad i=1,2,$$

    and we consider $A=\text{span}\{C_1, C_2\}$. Since the supports of $C_1$ and $C_2$ are disjoint, for every $v\in A$, $v=a_1 C_1+ a_2C_2$ we have

    \begin{equation*}
        \begin{aligned}
            &\int_\Omega \abs{v}^p \, dx= \abs{a_1}^p\int_{B_1\cap \Omega} \abs{C_1}^p+\abs{a_2}^p\int_{B_2\cap \Omega} \abs{C_2}^p ,\\
            &\int_\Omega \abs{\nabla v}^p\, dx= \frac{1}{s^p}(\abs{a_1}^p\abs{B_1\cap \Omega}+\abs{a_2}^p\abs{B_2\cap \Omega}),\\
            &\int_{\partial\Omega} \abs{v}^p\, d\mathcal{H}^{n-1} =\abs{a_1}^p \int_{B_1\cap \partial\Omega} \abs{C_1}^p+\abs{a_2}^p \int_{B_2\cap \partial\Omega} \abs{C_2}^p .
        \end{aligned}
    \end{equation*}
    So, by the variational characterization \eqref{var}, we have
    \begin{equation*}
    \begin{aligned}  &\limsup_{p\to\infty}(\lambda_{2,p})^\frac{1}{p}\le\\ &\limsup_{p\to\infty}\left(\frac{\frac{1}{s^p}(\abs{a_1}^p\abs{B_1\cap \Omega}+\abs{a_2}^p\abs{B_2\cap \Omega})+\beta^p(\abs{a_1}^p \int_{B_1\cap \partial\Omega} \abs{C_1}^p+\abs{a_2}^p \int_{B_2\cap \partial\Omega} \abs{C_2}^p )}{\abs{a_1}^p\int_{B_1\cap \Omega} \abs{C_1}^p+\abs{a_2}^p\int_{B_2\cap \Omega} \abs{C_2}^p}\right)^\frac{1}{p}\le
        \\
        &\limsup_{p\to\infty}\left(\frac{\frac{1}{s^p}  \max\{\abs{B_1\cap \Omega},\abs{B_2\cap \Omega}\}+\beta^p \max\{\int_{B_1\cap \partial\Omega} \abs{C_1}^p,\int_{B_2\cap \partial\Omega} \abs{C_2}^p \}}{  \min\{\int_{B_1\cap \Omega} \abs{C_1}^p,\int_{B_2\cap \Omega} \abs{C_2}^p\}}\right)^\frac{1}{p}=
        \\
    & \frac{\max\left\{\frac 1 s , \beta \norma{C_{1}}_{L^\infty (\partial\Omega)}, \beta \norma{C_{2}}_{L^\infty (\partial\Omega)}\right\}}{\min\left\{\norma{C_i}_{L^\infty(\Omega)}\right\}}\leq \\
        & \frac{1}{s(\Omega)}.
        \end{aligned}
    \end{equation*}
\end{proof}

Proposition \ref{autolimdirimezzo} ensures that, up to a subsequence that we will still denote by $\lambda_{2,p}(\Omega)$,

\begin{equation*}
    \lim_{p\to\infty} \lambda_{2,p}(\Omega)=\overline{\lambda}.
\end{equation*}

A key step in proving the main results is to show the following convergence property of the sequence of second eigenfunctions.
\begin{prop}
\label{uniformconvergence}
    Let $\{u_{p}\}$ be the sequence of the eigenfunctions associated to $\{\lambda_{2,p}(\Omega)\}$.
    Then, there exists a subsequence $\{ u_{p_i}\}$ such that
  \begin{align*}
   & u_{p_i} \to u_{\infty} & \text{ uniformly in } \, \Omega \\
   & \nabla u_{p_i}\to \nabla u_\infty & \text{ weakly in } \, L^q(\Omega), \forall q.
   \end{align*}
\end{prop}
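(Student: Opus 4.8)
The plan is to establish the needed compactness by obtaining uniform (in $p$) Lipschitz bounds on suitable normalizations of the eigenfunctions $u_p$, and then to pass to the limit using Ascoli--Arzel\`a together with the usual $L^q$-weak-compactness of gradients. First I would fix the normalization $\norma{u_p}_{L^p(\Omega)}=1$, so that by the variational characterization \eqref{var} and Proposition \ref{autolimdirimezzo} we have
\begin{equation*}
\int_\Omega \abs{\nabla u_p}^p\,dx + \beta^p\int_{\partial\Omega}\abs{u_p}^p\,d\mathcal{H}^{n-1} = \lambda_{2,p}(\Omega) \le \left(\frac{1}{s(\Omega)}+o(1)\right)^p.
\end{equation*}
From here, for any fixed $q$ and $p>q$, H\"older's inequality in the interior together with the trace term gives a uniform bound on $\norma{\nabla u_p}_{L^q(\Omega)}$ and on $\norma{u_p}_{L^q(\partial\Omega)}$ (after controlling $\abs{\Omega}$ and using the Lipschitz regularity of $\partial\Omega$ for the trace). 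This already yields weak-$L^q$ compactness of $\nabla u_{p_i}$ for every $q$ via a diagonal argument, and, combined with a uniform $W^{1,q}(\Omega)$ bound and Rellich, strong $L^q(\Omega)$ convergence of $u_{p_i}$ along a subsequence; the weak limit of the gradients is then identified with $\nabla u_\infty$ in the standard way.

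The remaining point is to upgrade the $L^q$ convergence of $u_{p_i}$ to uniform convergence on $\overline{\Omega}$, which requires equi-continuity. For this I would use a Morrey-type / Lipschitz estimate: from $\norma{\nabla u_p}_{L^q(\Omega)} \le C\, q^{?}$-type control, letting $q\to\infty$ after $p\to\infty$ one obtains $\norma{\nabla u_\infty}_{L^\infty(\Omega)} \le 1/s(\Omega)$, but to get equi-continuity of the sequence itself one argues as in \cite{JL,JLM,AMNT}: fix $q>n$; then $W^{1,q}(\Omega)\hookrightarrow C^{0,1-n/q}(\overline{\Omega})$ with an embedding constant independent of $p$ (here the Lipschitz character of $\Omega$ enters), and the uniform bound on $\norma{u_p}_{W^{1,q}(\Omega)}$ gives a uniform H\"older bound. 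Ascoli--Arzel\`a then provides a uniformly convergent subsequence, and its limit must coincide with the $L^q$-limit $u_\infty$. Normalizing instead (or in addition) by $\norma{u_p}_{L^\infty(\Omega)}=1$ and checking this passes to the limit, so that $u_\infty\not\equiv 0$, is a routine consequence of the $L^p$-to-$L^\infty$ interplay and Lemma 2.1.

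The main obstacle I anticipate is making the constants genuinely uniform in $p$ at the level of the trace term and the Sobolev embedding: one must be careful that the H\"older-conjugate manipulations $\left(\int_\Omega \abs{\nabla u_p}^q\right)^{1/q} \le \abs{\Omega}^{1/q - 1/p}\left(\int_\Omega\abs{\nabla u_p}^p\right)^{1/p}$ and the corresponding trace inequality do not hide a $p$-dependent loss, and that the constant in $W^{1,q}\hookrightarrow C^{0,1-n/q}$ depends only on $n$, $q$, and the Lipschitz constants of $\partial\Omega$, not on $p$. Once these are in place, the diagonal extraction over a countable set of exponents $q$ and the identification of the weak gradient limit are standard. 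I would present the argument closely following the first-eigenvalue case treated in \cite{AMNT}, pointing out that the only genuinely new input is that the right-hand side bound $\lambda_{2,p}(\Omega)^{1/p}$ stays bounded, which is exactly Proposition \ref{autolimdirimezzo}.
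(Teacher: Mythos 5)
Your proposal is correct and follows essentially the same route as the paper: normalize $\norma{u_p}_{L^p(\Omega)}=1$, use the equiboundedness of $(\lambda_{2,p}(\Omega))^{1/p}$ from Proposition \ref{autolimdirimezzo} together with H\"older to get $p$-independent $W^{1,q}(\Omega)$ bounds for each fixed $q$, and then extract by diagonalization a subsequence converging uniformly (via the Morrey embedding and Ascoli--Arzel\`a, as in the cited references) with gradients converging weakly in every $L^q(\Omega)$. The paper simply compresses the Morrey/Ascoli--Arzel\`a and diagonal-extraction details into a citation of the classical argument, which you spell out explicitly.
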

\begin{proof}
Proposition \ref{autolimdirimezzo} shows that the sequence of the second eigenvalues is bounded.
     Thanks to this property, it is possible to show that the sequence $\Set{u_{p}} $ of eigenfunctions associated to $\lambda_{2,p}(\Omega)$, normalized such that $\norma{u_{p}}_{L^{p}}=1$, is uniformly bounded in $W^{1,q}(\Omega)$: indeed, if $q <p$, by H\"older inequality,
\begin{gather*}
    \norma{\nabla u_{p}}_{L^q(\Omega)} \leq \norma{\nabla u_{p}}_{L^{p}(\Omega)} \abs{\Omega}^{\frac{1}{q} - \frac{1}{{p}}} \leq(\lambda_{2, {p}}(\Omega))^{1/{p}} \abs{\Omega}^{\frac{1}{q} - \frac{1}{{p}}} \leq  \frac{1}{s(\Omega)}\abs{\Omega}^\frac{1}{q},\\
    \norma{ u_{p}}_{L^q(\Omega)} \leq \norma{ u_{p}}_{L^{p}(\Omega)} \abs{\Omega}^{\frac{1}{q} - \frac{1}{{p}}} \leq \abs{\Omega}^{\frac{1}{q} - \frac{1}{{p}}} \leq  C,
\end{gather*}
where the constant $C$ is independent of ${p_i}$.

By a classical argument of diagonalization, see for instance \cite{BDM}, we can extract a subsequence,  denoted by $u_{p_i}$ such that
\begin{gather*}
     u_{p_i} \to u_\infty \,  \text{ uniformly}  \implies \Vert u_{p_i}\Vert_{L^{p_i}} \to \norma{u_{\infty}}_{L^{\infty}}, \\
    \nabla u_{p_i} \to \nabla u_\infty \, \text{ weakly in } \, L^q(\Omega), \, \forall q >1.
\end{gather*} 
\end{proof}
Before going on, we show that $u_{\infty}$ is an eigenfunction for the $\infty$-eigenvalue problem.
\begin{teorema}
\label{viscsol}
    Let $\{\lambda_{2,p_i}(\Omega)\}$ be a subsequence of second eigenvalues of the $p_i$-Laplacian with Robin boundary conditions and let $\{u_{p_i}\}$ be the sequence of the associated eigenfunctions. Let $\overline{\lambda}$ be the limit of $\{(\lambda_{2,p_i}(\Omega))^\frac{1}{p_i}\}$ and let $u_\infty$ be the limit of $\{u_{p_i}\}$. Then, $u_\infty$ is a viscosity solution to \eqref{prob:visc}
    \begin{equation*}
        \begin{cases}
            F_{\overline{\lambda}}(u, \nabla u, \nabla^2 u)=0 & \text{ in } \Omega,\\
            G_\beta(u,\nabla u)=0 & \text{ on } \partial \Omega.
        \end{cases}
    \end{equation*}
\end{teorema}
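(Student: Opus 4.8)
The plan is to pass to the viscosity limit in the weak formulation \eqref{eq:debole}, following the by-now classical strategy of Juutinen--Lindqvist--Manfredi adapted to the Robin setting as in \cite{AMNT,RS}. First I would fix a point $x_0\in\overline{\Omega}$ and a test function $\phi\in C^2(\overline{\Omega})$ touching $u_\infty$ from below at $x_0$, with $x_0$ a strict local minimum of $u_\infty-\phi$. By the uniform convergence $u_{p_i}\to u_\infty$ (Proposition \ref{uniformconvergence}), there are points $x_{p_i}\to x_0$ that are local minima of $u_{p_i}-\phi$; at such points one may use $\phi$ (suitably translated by a constant) as a competitor in the equation satisfied by $u_{p_i}$. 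The argument naturally splits according to whether $x_0\in\Omega$ or $x_0\in\partial\Omega$, and, within each case, according to the sign of $u_\infty(x_0)$, matching the three branches of $F_{\overline\lambda}$ and of $G_\beta$.

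For an interior point $x_0\in\Omega$ with, say, $u_\infty(x_0)>0$, one argues as follows. The eigenfunctions $u_{p_i}$ are, in the interior, viscosity solutions of $-\Delta_{p}u_p=\lambda_{2,p}|u_p|^{p-2}u_p$; expanding the $p$-Laplacian on the smooth test function gives, at the contact point $x_{p_i}$,
\begin{equation*}
-|\nabla\phi(x_{p_i})|^{p-2}\Big(\Delta\phi(x_{p_i})+(p-2)\frac{\Delta_\infty\phi(x_{p_i})}{|\nabla\phi(x_{p_i})|^{2}}\Big)\ge \lambda_{2,p_i}(\Omega)\,\phi(x_{p_i})^{p-1}.
\end{equation*}
Dividing by $(p-2)|\nabla\phi|^{p-4}$ and letting $p\to\infty$ kills the $\Delta\phi$ term and forces $-\Delta_\infty\phi(x_0)\ge 0$ unless $\nabla\phi(x_0)=0$; separately, taking $(p-1)$-th roots of the displayed inequality and using $(\lambda_{2,p_i})^{1/p_i}\to\overline\lambda$ together with $\phi(x_0)=u_\infty(x_0)$ yields $|\nabla\phi(x_0)|\ge\overline\lambda\,u_\infty(x_0)$, i.e. $|\nabla\phi(x_0)|-\overline\lambda|u_\infty(x_0)|\ge 0$. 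Hence $\min\{|\nabla\phi|-\overline\lambda|u|,-\Delta_\infty\phi\}(x_0)\ge 0$, which is precisely the supersolution inequality for the branch $\{u>0\}$. The branches $\{u<0\}$ and $\{u=0\}$, and the corresponding subsolution inequalities, are handled by the same computation with the inequalities reversed and the sign bookkeeping carried through; on $\{u=0\}$ the zeroth-order terms drop out and only $-\Delta_\infty\phi$ survives.

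For a boundary point $x_0\in\partial\Omega$ one must additionally exploit the Robin condition $|\nabla u_p|^{p-2}\partial_\nu u_p+\beta^p|u_p|^{p-2}u_p=0$. If the contact point $x_{p_i}$ stays in the interior we are back to the previous case and obtain the $F$-inequality; if $x_{p_i}\in\partial\Omega$, substituting $\phi$ into the boundary condition and taking $(p-1)$-th roots gives in the limit $|\nabla\phi(x_0)|\ge\beta u_\infty(x_0)$ (on $\{u>0\}$) together with the sign of $\partial_\nu\phi(x_0)$ that makes $-\min\{|\nabla\phi|-\beta u,-\partial_\nu\phi\}\ge 0$; combined with the interior alternative this produces exactly $\max\{F_{\overline\lambda},G_\beta\}(x_0)\ge 0$ as required by the definition of viscosity supersolution at a boundary point. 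Since $u_\infty$ is continuous (being a uniform limit) and, by $\|u_p\|_{L^p}=1$ together with $u_p\to u_\infty$ uniformly, $\|u_\infty\|_\infty=1\ne 0$, the limit is a genuine nontrivial $\infty$-eigenfunction with eigenvalue $\overline\lambda$.

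The main obstacle is the bookkeeping at the boundary: one must carefully track whether the touching points $x_{p_i}$ approach $x_0$ from inside $\Omega$ or along $\partial\Omega$ (and they may oscillate between the two along the subsequence), and then show that in every scenario one of the two inequalities $F_{\overline\lambda}\ge 0$ or $G_\beta\ge 0$ passes to the limit, so that their maximum is nonnegative. A secondary technical point is the degenerate case $\nabla\phi(x_0)=0$, where the division by $|\nabla\phi|^{p-4}$ is illegitimate; there one argues directly that $-\Delta_\infty\phi(x_0)=\langle\nabla^2\phi(x_0)\nabla\phi(x_0),\nabla\phi(x_0)\rangle=0$ and that the first-order term is vacuously satisfied, as in \cite{JL}. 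Everything else is the routine $p$-Laplacian-to-$\infty$-Laplacian expansion, and the equiboundedness needed to make the limits meaningful is already provided by Proposition \ref{autolimdirimezzo} and Proposition \ref{uniformconvergence}.
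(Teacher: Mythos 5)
Your overall strategy is exactly the paper's: pass to the limit in the viscosity (not weak) formulation, using translated test functions at contact points $x_{p_i}\to x_0$, and split according to $x_0\in\Omega$ or $x_0\in\partial\Omega$ and the sign of $u_\infty(x_0)$; the interior computation you carry out (dividing by $(p-2)\abs{\nabla\phi}^{p-4}$ for the $\infty$-Laplacian term, taking roots for the first-order term) is the same as in the paper, with your root-taking replacing the paper's blow-up argument in \eqref{f1}--\eqref{f2}. However, two of the steps you describe would fail as written. First, the boundary paragraph has the signs crossed: from the $p$-level Robin inequality in the \emph{supersolution} direction (the analogue of \eqref{machesoio2}) you cannot conclude $\abs{\nabla\phi(x_0)}\ge\beta u_\infty(x_0)$ together with a favourable sign of $\partial_\nu\phi(x_0)$; that pair of facts ($\abs{\nabla\phi}\ge\beta u$ and $\partial_\nu\phi\le 0$) is precisely what the \emph{subsolution} inequality yields, and it gives $G_\beta\le 0$, the opposite of the $-\min\{\abs{\nabla\phi}-\beta u,\,-\partial_\nu\phi\}\ge 0$ you claim. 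The correct supersolution argument on $\{u_\infty>0\}$ is a dichotomy on the sign of $\partial_\nu\phi_i(x_i)$ along the subsequence: if $\partial_\nu\phi_i(x_i)\ge0$ infinitely often you get $\partial_\nu\phi(x_0)\ge 0$, otherwise the Robin inequality gives in the limit $\abs{\nabla\phi(x_0)}\le\beta u_\infty(x_0)$; either alternative makes $G_\beta(x_0)\ge 0$.

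Second, your dismissal of the degenerate case $\nabla\phi(x_0)=0$ ("the first-order term is vacuously satisfied") is only correct in the branches where that term sits on the harmless side of the min/max, namely the subsolution inequality on $\{u_\infty>0\}$ and the supersolution inequality on $\{u_\infty<0\}$. In the two branches where \emph{both} entries must have a sign (supersolution on $\{u_\infty>0\}$: $\min\{\abs{\nabla\phi}-\overline\lambda\phi,-\Delta_\infty\phi\}\ge0$; subsolution on $\{u_\infty<0\}$: $\max\{-\abs{\nabla\phi}-\overline\lambda\phi,-\Delta_\infty\phi\}\le0$), a test function with $\nabla\phi(x_0)=0$ would violate the required inequality, so "vacuous" is not an option: one must show this situation cannot occur, by observing that the $p$-level contact inequality is then incompatible with the strictly signed right-hand side $\lambda_{2,p_i}\abs{\phi_i(x_i)}^{p_i-2}\phi_i(x_i)$ (this is exactly what the blow-up of the right-hand sides of \eqref{f1} and \eqref{f2} encodes, and it is the same mechanism that produces the first-order inequality $\abs{\nabla\phi(x_0)}\ge\overline\lambda\,\abs{u_\infty(x_0)}$, so it is not a side issue). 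With these two points repaired, the rest of your outline matches the paper's proof.
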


\begin{proof}
    Let us show that $u_\infty$ is a viscosity subsolution. 
\begin{description}
    \item[Case 1:] Let $x_0\in \Omega$ and let $\varphi\in C^2(\Omega)$ such that $u_\infty(x_0)=\varphi(x_0)$ and $u_\infty(x)\le\varphi(x)$.
\begin{description}
    \item[Subcase 1.a:] if $u_\infty(x_0)>0$, we want to show that
    $$\min\Set{\abs{\nabla \varphi(x_0)} - \overline{\lambda} \varphi(x_0), - \Delta_\infty \varphi(x_0)}\le 0.$$
        \begin{itemize}
            \item if $- \Delta_\infty \varphi(x_0)\le 0$ the claim follows. 
            \item if $- \Delta_\infty \varphi(x_0)> 0$, then we have to consider that $u_{p_i}$ uniformly converges to $u_\infty$, so there exists  $B_\rho(x_0)$ such that $u_{p_i}>0$ for all $x\in B_\rho(x_0)$.
            Moreover, $u_{p_i}$ is a viscosity solution to $$-\Delta_{p_i}u_{p_i}=\lambda_{2, p_i}\abs{u_{p_i}}^{p_i-2}u_{p_i}.$$
    
            Notice that $u_{p_i}- \varphi$ has a maximum in $x_i$ and by the uniform convergence $x_i \to x_0$. If we set $\varphi_i(x) =\varphi(x) + c_i$ with $c_i= u_{p_i}(x_i) - \varphi(x_i)$, we can use $\varphi_i$ in the definition of viscosity subsolution, obtaining
            \begin{equation*}
                -\abs{\nabla \varphi_i(x_i)}^{p_i-2} \hspace{-3pt}\Delta \varphi_i(x_i)- (p_i-2) \abs{\nabla \varphi_i(x_i)}^{p_i-4}\hspace{-.6pt}\Delta_\infty \varphi (x_i)\le \hspace{-1mm}\lambda_{2, p_i} \abs{\varphi_i(x_i)}^{p_i-2}\varphi_i(x_i).
            \end{equation*}
            To show that  $$\abs{\nabla\varphi(x_0)}-\overline{\lambda}\varphi(x_0)\le 0,$$
            we can divide both sides of the inequality for $(p_i-2)\abs{\varphi_i(x_i)}^{p_i-4}$, and raise everything to the power $1/p_i$ (the left-hand side is positive since $- \Delta_\infty \varphi(x_0)> 0$), we get
            \begin{equation*}
               \left( \frac{-\abs{\nabla \varphi_i(x_i)}^{2} \Delta \varphi_i(x_i)}{(p_i-2)}- \Delta_\infty \varphi (x_i)\right)^{\frac{1}{p_i}}\le \left(\frac{\lambda_{2, p_i} \abs{\varphi_i(x_i)}^{p_i-2}\varphi_i(x_i)}{(p_i-2) \abs{\nabla\varphi_i(x_i)}^{p_i-4}}\right)^{\frac{1}{p_i}},
            \end{equation*}
            and we get the claim. Let us observe that we need to compute the power $1/p_i$ in order to properly understand the behaviour of the right-hand side.        
        \end{itemize}
\item[Subcase 1.b:] if  $u_\infty(x_0)=0$, we want to show that $$-\Delta_\infty \varphi(x_0)\le 0.$$
We argue as before obtaining
\begin{equation*}
    -\Delta_\infty\varphi(x_i)\le \frac{\abs{\nabla\varphi(x_i)}^2\Delta\varphi(x_i)}{p_i-2}+ \left(\frac{\lambda_{2,p_i}^{\frac{1}{p_i-4}}\abs{\varphi_i(x_i)}}{\abs{\nabla\varphi(x_i)}}\right)^{p_i-4}\frac{\abs{\varphi_i(x_i)}^2\varphi_i(x_i)}{p_i-2},
\end{equation*}
that for $p_i\to\infty$ gives
$$-\Delta_\infty\varphi(x_0)\le 0.$$
\item[Subcase 1.c:] if  $u_\infty(x_0)<0$, we want to show that
    $$\max\Set{-\abs{\nabla \varphi(x_0)} - \overline{\lambda}\varphi(x_0), - \Delta_\infty \varphi(x_0)}\le 0.$$

With the same notation as before, we can use $\varphi_i$ in the definition of viscosity subsolution, obtaining 
\begin{equation*}
    -\abs{\nabla \varphi_i(x_i)}^{p_i-2} \Delta \varphi_i(x_i)- (p_i-2) \abs{\nabla \varphi_i(x_i)}^{p_i-4}\Delta_\infty \varphi_i (x_i)\le\lambda_{2,p_i} \abs{\varphi_i(x_i)}^{p_i-2}\varphi_i(x_i).
\end{equation*}
Now, dividing by $(p_i-2) \abs{\nabla \varphi_i(x_i)}^{p_i-4}$, we obtain
\begin{equation}
\label{f1}
   - \Delta_\infty \varphi_i(x_i) - \frac{\abs{\nabla \varphi_i(x_i)}^{2 } \Delta \varphi_i(x_i)}{ p_i-2} \leq \frac{ \abs{\nabla \varphi_i(x_i)}^4\varphi_i(x_i)}{(p_i-2) \abs{\varphi_i(x_i)}^2}	\left(\frac{\lambda_{2,p_i}^{1/p_i} \abs{\varphi_i(x_i)}}{ \abs{\nabla \varphi_i(x_i)}}\right)^{p_i}
\end{equation}

  This gives us
$
-\abs{\nabla \varphi(x_0)}-\overline{\lambda} \varphi(x_0)\leq 0 $ since, otherwise, the right-hand side of \eqref{f1} would go to minus infinity, in contradiction with the fact that $\varphi \in C^2(\Omega)$. Moreover 
$- \Delta_\infty \varphi(x_0)\leq 0
$, just taking the limit.

\end{description}
    \item[Case 2:] Let $x_0\in \partial \Omega$, let $\varphi\in C^2$ such that $u_\infty(x_0)=\varphi(x_0)$ and $u_\infty(x)\le\varphi(x)$. Let $x_i$ the maximum of $u_{p_i}-\varphi$, and let us consider $\varphi_i$ as before. As already observed, $x_i\to x_0$.
If $x_i\in \Omega$ for infinitely many $i$, then, Case 1 shows that

$$F_{\overline{\lambda}}(u, \nabla u, \nabla^2 u)\le 0, \quad \Rightarrow \quad \min\{F_{\overline{\lambda}}(u, \nabla u, \nabla^2 u), G_\beta(u,\nabla u)\}\le 0.$$

If $x_i\in \partial\Omega$ for infinitely many $i$, then we have
\begin{equation}
    \label{machesoio}
\abs{\nabla\varphi_i(x_i)}^{p_i-2}\frac{\partial\varphi_i(x_i)}{\partial\nu}+\beta^{p_i}\abs{\varphi_i(x_i)}^{p_i-2}\varphi_i(x_i)\le 0.
\end{equation}
\begin{description}
    \item[Subcase 2.a:] if $u_\infty(x_0)>0$, by \eqref{machesoio}, we have that both $|\nabla\varphi_i(x_i)| >0$ and \\    
    $\displaystyle{\frac{\partial \varphi_i(x_i)}{\partial\nu} <0}$. Moreover, from \eqref{machesoio} we also have
    $$\left(\dfrac{\beta^{p_i}\varphi_i(x_i)^{p_i-1}}{\abs{\nabla\varphi_i(x_i)}^{p_i-2}}\right)^\frac{1}{p_i}\le \left(-\frac{\partial \varphi_i(x_i)}{\partial \nu}\right)^\frac{1}{p_i}$$
    and if we take the limit we get
$$-\min\left\{|\nabla\varphi(x_0)|-\beta \varphi(x_0), -\frac{\partial \varphi(x_0)}{\partial\nu}\right\}\le 0.$$
\item[Subcase 2.b:] if  $u_\infty(x_0)=0$,  then $\displaystyle{\frac{\partial \varphi(x_0)}{\partial \nu}\leq 0}$, otherwise  we should have 
   $$1 \leq \left(\dfrac{-\beta^{p_i}|\varphi_i(x_i)|^{p_i-2}\varphi_i(x_i)}{\abs{\nabla\varphi_i(x_i)}^{p_i-2}}\right)^\frac{1}{p_i}\left(\frac{\partial \varphi_i(x_i)}{\partial \nu}\right)^{-\frac{1}{p_i}} \to 0,$$
   which is a contradiction.

   \item[Subcase 2.c:] if $u_\infty(x_0)<0$, we have to distinguish the case 
 $\displaystyle{\frac{\partial\varphi_i(x_i)}{\partial\nu}\le0}$ that immediately gives
 $$-\max\left\{-|\nabla\varphi(x_0)|-\beta \varphi(x_0), -\frac{\partial \varphi(x_0)}{\partial\nu}\right\}\le 0,$$
 
 and the case $\displaystyle{\frac{\partial\varphi_i(x_i)}{\partial\nu}>0}$, that implies
    $$\left(\dfrac{-\beta^{p_i}|\varphi_i(x_i)|^{p_i-2}\varphi_i(x_i)}{\abs{\nabla\varphi_i(x_i)}^{p_i-2}}\right)^\frac{1}{p_i}\ge \left(\frac{\partial \varphi_i(x_i)}{\partial \nu}\right)^\frac{1}{p_i},$$
    and if we take the limit we get
$$-\max\left\{-|\nabla\varphi(x_0)|-\beta \varphi(x_0), -\frac{\partial \varphi(x_0)}{\partial\nu}\right\}\le 0.$$
\end{description} 
\end{description}

   Let us show that $u_\infty$ is a viscosity supersolution. 
\begin{description}
    \item[Case 3:] Let $x_0\in \Omega$ and let $\varphi\in C^2(\Omega)$ such that $u_\infty(x_0)=\varphi(x_0)$ and $u_\infty(x)\ge\varphi(x)$.
\begin{description}
    \item[Subcase 3.a:] if  $u_\infty(x_0)>0$, we want to show that
    $$\min\Set{\abs{\nabla \varphi(x_0)} - \overline{\lambda} \varphi(x_0), - \Delta_\infty \varphi(x_0)}\ge 0.$$

With the same notation as before, we can use $\varphi_i$ in the definition of viscosity supersolution, obtaining 
\begin{equation*}
    -\abs{\nabla \varphi_i(x_i)}^{p_i-2} \Delta \varphi_i(x_i)- (p_i-2) \abs{\nabla \varphi_i(x_i)}^{p_i-4}\Delta_\infty \varphi_i (x_i)\ge\lambda_{2,p_i} \abs{\varphi_i(x_i)}^{p_i-2}\varphi_i(x_i).
\end{equation*}
Now, dividing by $(p_i-2) \abs{\nabla \varphi_i(x_i)}^{p_i-4}$, we obtain
\begin{equation}
\label{f2}
   - \Delta_\infty \varphi_i(x_i) - \frac{\abs{\nabla \varphi_i(x_i)}^{2 } \Delta \varphi_i(x_i)}{ p_i-2} \geq \frac{ \abs{\nabla \varphi_i(x_i)}^4\varphi_i(x_i)}{(p_i-2) \abs{\varphi_i(x_i)}^2}	\left(\frac{\lambda_{2,p_i}^{1/p_i} \varphi_i(x_i)}{ \abs{\nabla \varphi_i(x_i)}}\right)^{p_i}
\end{equation}

  This gives us
$
\abs{\nabla \varphi(x_0)}-\overline{\lambda}\varphi(x_0)\geq 0 $ since, otherwise, the right-hand side of \eqref{f2} would go to infinity, in contradiction with the fact that $\varphi \in C^2(\Omega)$. Moreover 
$- \Delta_\infty \varphi(x_0)\geq 0
$, just taking the limit.
\item[Subcase 3.b:] if  $u_\infty(x_0)=0$, we want to show that $$-\Delta_\infty \varphi(x_0)\ge 0.$$
We argue as before obtaining
\begin{equation*}
    -\Delta_\infty\varphi_i(x_i)\ge \frac{\abs{\nabla\varphi_i(x_i)}^2\Delta\varphi_i(x_i)}{p_i-2}+ \left(\frac{\lambda_{2,p_i}^{\frac{1}{p_i-4}}\abs{\varphi_i(x_i)}}{\abs{\nabla\varphi_i(x_i)}}\right)^{p_i-4}\frac{\abs{\varphi_i(x_i)}^2\varphi_i(x_i)}{p_i-2},
\end{equation*}
that for $p_i\to\infty$ gives
$$-\Delta_\infty\varphi(x_0)\ge 0.$$
\item[Subcase 3.c:] if $u_\infty(x_0)<0$, we want to show that
    $$\max\Set{-\abs{\nabla \varphi(x_0)} - \overline{\lambda} \varphi(x_0), - \Delta_\infty \varphi(x_0)}\ge 0.$$
        \begin{itemize}
            \item if $- \Delta_\infty \varphi(x_0)\ge 0$ the claim follows. 
            \item if $- \Delta_\infty \varphi(x_0)< 0$, then we have to consider that $u_{p_i}$ uniformly converges to $u_\infty$, so there exists  $B_\rho(x_0)$ such that $u_{p_i}<0$ for all $x\in B_\rho(x_0)$.
            Moreover, $u_{p_i}$ is a viscosity solution to $$-\Delta_{p_i}u_{p_i}=\lambda_{2, p_i}\abs{u_{p_i}}^{p_i-2}u_{p_i}.$$
    
            Notice that $u_{p_i}- \varphi$ has a minimumimum in $x_i$ and by the uniform convergence $x_i \to x_0$. If we set $\varphi_i(x) =\varphi(x) + c_i$ with $c_i= u_{p_i}(x_i) - \varphi(x_i)$, we can use $\varphi_i$ in the definition of viscosity supersolution, obtaining
            \begin{equation*}
                -\abs{\nabla \varphi_i(x_i)}^{p_i-2} \hspace{-1.75pt} \Delta \varphi_i(x_i)- (p_i-2) \abs{\nabla \varphi_i(x_i)}^{p_i-4}\hspace{-1.75pt}\Delta_\infty \varphi (x_i)\hspace{-1mm}\ge \lambda_{2, p_i} \abs{\varphi_i(x_i)}^{p_i-2}\varphi_i(x_i).
            \end{equation*}
             We can divide both sides of the inequality for $(p_i-2)\abs{\varphi_i(x_i)}^{p_i-4}$, and raise everything to the power $1/p_i$ (the left-hand side is positive since $- \Delta_\infty \varphi(x_0)< 0$), we get
            \begin{equation*}
               \left( \frac{\abs{\nabla \varphi_i(x_i)}^{2} \Delta \varphi_i(x_i)}{(p_i-2)}+ \Delta_\infty \varphi_i (x_i)\right)^{\frac{1}{p_i}}\le \left(-\frac{\lambda_{2, p_i} \abs{\varphi_i(x_i)}^{p_i-2}\varphi_i(x_i)}{(p_i-2) \abs{\nabla\varphi_i(x_i)}^{p_i-4}}\right)^{\frac{1}{p_i}},
            \end{equation*}
            that gives
            $$-\abs{\nabla\varphi(x_0)}-\overline{\lambda}\varphi(x_0)\ge 0.$$
        \end{itemize}

\end{description}
    \item[Case 4:] Let $x_0\in \partial \Omega$, let $\varphi\in C^2$ such that $u_\infty(x_0)=\varphi(x_0)$ and $u_\infty(x)\ge\varphi(x)$. Let $x_i$ be the minimum of $u_{p_i}-\varphi$, and let us build $\varphi_i$ as before. As already observed, $x_i\to x_0$.  If $x_i\in \Omega$ for infinitely many $i$, then, Case 3 shows that

$$F_{\overline{\lambda}}(u, \nabla u, \nabla^2 u)\ge 0, \quad \Rightarrow \quad \max\{F_{\overline{\lambda}}(u, \nabla u, \nabla^2 u), G_\beta(u,\nabla u)\}\ge 0.$$

If $x_i\in \partial\Omega$ for infinitely many $i$, then we have
\begin{equation}
    \label{machesoio2}
\abs{\nabla\varphi_i(x_i)}^{p_i-2}\frac{\partial\varphi_i(x_i)}{\partial\nu}+\beta^{p_i}\abs{\varphi_i(x_i)}^{p_i-2}\varphi_i(x_i)\ge 0.
\end{equation}
\begin{description}
 \item[Subcase 4.a:] if $u(x_0)>0$, we have to distinguish the case 
 $\frac{\partial\varphi_i(x_i)}{\partial\nu}\ge0$ that immediately gives
 $$-\min\left\{|\nabla\varphi(x_0)|-\beta \varphi(x_0), -\frac{\partial \varphi(x_0)}{\partial\nu}\right\}\ge 0,$$
 
 and the case $\frac{\partial\varphi_i(x_i)}{\partial\nu}<0$,
    $$\left(\dfrac{\beta^{p_i}|\varphi_i(x_i)|^{p_i-2}\varphi_i(x_i)}{\abs{\nabla\varphi_i(x_i)}^{p_i-2}}\right)^\frac{1}{p_i}\ge \left(-\frac{\partial \varphi_i(x_i)}{\partial \nu}\right)^\frac{1}{p_i}$$
    and if we take the limit we get
$$-\min\left\{|\nabla\varphi(x_0)|-\beta \varphi(x_0), -\frac{\partial \varphi(x_0)}{\partial\nu}\right\}\ge 0.$$
    
\item[Subcase 4.b:] if  $u(x_0)=0$,  then $\frac{\partial \varphi(x_0)}{\partial \nu}\geq 0$, otherwise  we should have 
   $$1 \leq \left(\dfrac{\beta^{p_i}|\varphi_i(x_i)|^{p_i-2}\varphi_i(x_i)}{\abs{\nabla\varphi_i(x_i)}^{p_i-2}}\right)^\frac{1}{p_i}\left(\dfrac{1}{-\frac{\partial \varphi_i(x_i)}{\partial \nu}}\right)^\frac{1}{p_i} \to 0,$$
   which is a contradiction.

\item[Subcase 4.c:] if $u(x_0)<0$, by \eqref{machesoio2}, we have that both $|\nabla\varphi_i(x_i)| >0$ and $\frac{\partial \varphi_i(x_i)}{\partial\nu} >0$. Moreover, from \eqref{machesoio} we also have
    $$\left(\dfrac{-\beta^{p_i}\varphi_i(x_i)^{p_i-1}}{\abs{\nabla\varphi_i(x_i)}^{p_i-2}}\right)^\frac{1}{p_i}\le \left(\frac{\partial \varphi_i(x_i)}{\partial \nu}\right)^\frac{1}{p_i}$$
    and if we take the limit we get
$$-\max\left\{-|\nabla\varphi(x_0)|-\beta \varphi(x_0), -\frac{\partial \varphi(x_0)}{\partial\nu}\right\}\ge 0.$$
\end{description} 
\end{description}
\end{proof}

To conclude the proof of Theorem \ref{autolimdiri}, in the case $\beta> 2/D(\Omega)$, we need to prove a sign condition on the limit of the eigenfunctions. This is contained in Lemma \ref{segnpiu}. 


\begin{lemma}\label{segnpiu}
    Let $\beta>\frac{2}{D(\Omega)}$. 
    The limit $u_\infty$ of the second eigenfunction of the $p$-Laplace operator with Robin boundary conditions changes sign. 
\end{lemma}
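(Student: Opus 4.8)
The plan is to argue by contradiction: suppose that $u_\infty$ does not change sign, say $u_\infty \ge 0$ on $\overline{\Omega}$ (the case $u_\infty \le 0$ being symmetric since the problem is odd). Since $u_\infty$ is the uniform limit of the second $p_i$-eigenfunctions $u_{p_i}$ normalized by $\|u_{p_i}\|_{L^{p_i}} = 1$, we have $\|u_\infty\|_{L^\infty(\Omega)} = 1$, so $u_\infty \not\equiv 0$. The first step is to recall, from the previous paper \cite{AMNT}, the variational characterization \eqref{var_carlambda1} of $\lambda_{1,\infty}(\Omega)$ together with the geometric identity \eqref{lambda1}, so that $\lambda_{1,\infty}(\Omega) = 1/(\tfrac1\beta + r(\Omega))$; the point is that the first $\infty$-eigenfunction, when $\Omega$ is "generic", is essentially supported near a single ball of radius $r(\Omega)$, and in particular $\lambda_{1,\infty}(\Omega) > \lambda_{2,\infty}(\Omega)$ is \emph{false} — rather one always has $\lambda_{1,\infty} \le \lambda_{2,\infty}$, and the content here is that a non-sign-changing limit would force $u_\infty$ to be (morally) a first eigenfunction while carrying the second eigenvalue, producing a strict inequality the wrong way.

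More concretely, the second step is to show that a nonnegative $u_\infty$ satisfying the viscosity problem \eqref{prob:visc} with eigenvalue $\overline\lambda := \lambda_{2,\infty}(\Omega) = 1/s(\Omega)$ would have to satisfy the \emph{first} $\infty$-eigenvalue relations: on $\{u_\infty > 0\}$ it solves $\min\{|\nabla u_\infty| - \overline\lambda u_\infty, -\Delta_\infty u_\infty\} = 0$ and on the boundary the Robin-type condition, exactly the system whose minimal eigenvalue is $\lambda_{1,\infty}(\Omega)$. The standard cone-comparison / maximum-principle machinery for the $\infty$-Laplacian (as used in \cite{AMNT, JLM}) then forces $\overline\lambda \ge \lambda_{1,\infty}(\Omega) = 1/(\tfrac1\beta + r(\Omega))$, i.e. $s(\Omega) \le \tfrac1\beta + r(\Omega)$. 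The third step is a purely geometric estimate showing that, \emph{when} $\beta > 2/D(\Omega)$, one in fact has the \emph{strict reverse} inequality $s(\Omega) > \tfrac1\beta + r(\Omega)$ is impossible to reconcile — more precisely I would show directly from definition \eqref{ess} that under $\beta > 2/D(\Omega)$ one can always find two points $x_1, x_2$ far apart with small cone boundary traces giving $s(\Omega) > r(\Omega)$ strictly, hence the constraint $s(\Omega) \le \tfrac1\beta + r(\Omega)$ from the non-sign-changing hypothesis is consistent only in a degenerate configuration, and pushing on that configuration (the two competitor cones must then overlap or collapse onto the inball) yields the contradiction. This is where the hypothesis $\beta > 2/D(\Omega)$ is genuinely used: for small $\beta$ one has $\lambda_{2,\infty} = \lambda_\infty^N = 2/D(\Omega)$ and the Neumann-type first eigenfunction need not change sign, which is exactly why the lemma restricts to large $\beta$.

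I expect the main obstacle to be the rigorous passage from "$u_\infty \ge 0$ solves \eqref{prob:visc} with $\overline\lambda$" to "$\overline\lambda$ is bounded below by $\lambda_{1,\infty}(\Omega)$" — that is, establishing the comparison principle in the viscosity setting for this degenerate operator with the nonstandard mixed boundary term, since the operator is not proper in the usual sense and $\overline\lambda$ appears in a coupled way. To handle this I would either invoke the comparison results already developed in \cite{AMNT} for problem \eqref{ciao2} (extending them, if necessary, to the one-sided system on $\{u_\infty > 0\}$), or argue more directly: take a point $x^* \in \Omega$ where $u_\infty$ attains its maximum $1$, build the explicit cone competitor $w(x) = (1 - \overline\lambda\, \mathrm{dist}(x, \{u_\infty = 0\} \cup \partial\Omega_{\mathrm{eff}}))_+$-type comparison function adapted to the Robin trace, and use it as a test function to read off that $\overline\lambda \cdot (\tfrac1\beta + r(\Omega)) \ge 1$. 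Then combine with the geometric step. The bookkeeping in the geometric step — showing the two admissible cone centers in \eqref{ess} cannot simultaneously satisfy $|x_1 - x_2| \ge 2s(\Omega)$, the boundary trace bound, and $s(\Omega) = \tfrac1\beta + r(\Omega)$ when $\beta D(\Omega) > 2$ — should be routine once the setup is fixed, but it requires care about which of the two cones controls the boundary norm.
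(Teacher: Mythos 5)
Your strategy has a fatal structural flaw: the inequality you aim to extract from the non-sign-changing hypothesis, namely $\overline\lambda\ge\lambda_{1,\infty}(\Omega)$, i.e.\ $s(\Omega)\le\frac1\beta+r(\Omega)$, holds \emph{always} (it is just $\lambda_{1,\infty}\le\lambda_{2,\infty}$), so it cannot produce a contradiction; and the geometric step you hope will clash with it is false, because for $\beta>2/D(\Omega)$ the configuration $s(\Omega)=\frac1\beta+r(\Omega)$, i.e.\ $\lambda_{2,\infty}(\Omega)=\lambda_{1,\infty}(\Omega)$, genuinely occurs --- the paper's thin stadium example has exactly this for $\beta\ge 2/(D(\Omega)-4r(\Omega))$ --- and yet the lemma must still hold there. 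This shows that no argument using only the limiting viscosity problem \eqref{prob:visc} and the value of $\overline\lambda$ can prove the sign change: when $\lambda_{1,\infty}=\lambda_{2,\infty}$ there exists a positive viscosity eigenfunction for that very eigenvalue, so the information ``$u_\infty\ge0$ solves \eqref{prob:visc} with $\overline\lambda$'' is not contradictory in itself. What must be used, and what your proposal never touches, is that $u_\infty$ is the limit of \emph{second} $p$-eigenfunctions; the real difficulty is to exclude that the negative parts $u_{p_i}^-$ simply vanish in the limit.

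The paper's proof supplies exactly this missing quantitative input at finite $p$: testing the weak formulation \eqref{eq:debole} with $u_{p_i}^+$ (and $u_{p_i}^-$) shows $(\lambda_{2,p_i})^{1/p_i}\ge(\lambda_{1,p_i}(\Omega_i^\pm))^{1/p_i}$, and the Faber--Krahn-type bound for the first Robin $p$-eigenvalue from \cite{DLG} converts this into a uniform lower bound on the measures of the nodal sets, leading to $\abs{\{u_\infty\ge0\}},\abs{\{u_\infty\le0\}}\ge\omega_n\bigl(s(\Omega)-\tfrac1\beta\bigr)^n$. The hypothesis $\beta>2/D(\Omega)$ enters only to prove the strict inequality $s(\Omega)>1/\beta$ (via the inner parallel sets $\Omega_t$ and the continuity of $t\mapsto D(\Omega_t)-2\frac{1+\beta t}{\beta}$), which makes these lower bounds strictly positive. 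Finally, if $u_\infty$ were one-signed, Theorem \ref{viscsol} makes it a nonnegative viscosity supersolution of $-\Delta_\infty u=0$, and the Harnack inequality forces $u_\infty>0$ in $\Omega$, contradicting $\abs{\{u_\infty\le0\}}>0$. Your proposal contains neither the finite-$p$ measure estimate nor any substitute for it, so the gap is essential rather than technical.
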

\begin{proof}
Let us denote by 
$$ \Omega_i^+=\{x\in \Omega : u_{p_i}(x)>0\}, \quad \Omega_i^-=\{x\in \Omega : u_{p_i}(x)<0\},$$
then, if we choose $\varphi=u_{p_i}^+$ in the weak formulation \eqref{eq:debole}, 
\begin{align*}
\left(\lambda_{2,p_i}(\Omega)\right)^{\frac{1}{p_i}}&=\left(\frac{\int_{\Omega_i^+} \abs{\nabla u_{p_i}}^{p_i}+\beta^{p_i} \int_{\Omega_i^+\cap\partial\Omega}\abs{u_{p_i}}^{p_i}}{\int_{\Omega_i^+}\abs{u_{p_i}}^{p_i}}\right)^{\frac{1}{p_i}}\\&\ge \left(\lambda_{1,p_i}( \Omega_i^+)\right)^{\frac{1}{p_i}}
\\&\ge \left(\lambda_{1,p_i}( (\Omega_i^+)^{\sharp})\right)^{\frac{1}{p_i}}
\\&\ge \frac{p_i-1}{p_i} \frac{\omega_n^{\frac{1}{n}} }{\abs{\Omega_i^+}^\frac{1}{np_i} \left(\omega_n^{\frac{1}{n}}\beta^{-\frac{p_i}{p_i-1}} + \abs{\Omega_i^+}^{\frac{1}{n}}\right)^{\frac{p_i-1}{p_i}}}
\\&\geq \frac{p_i-1}{p_i} \frac{\omega_n^{\frac{1}{n}}}{\omega_n^{\frac{1}{n}}\beta^{-\frac{p_i}{p_i-1}} + \abs{\Omega_i^+}^{\frac{1}{n}}} 
\end{align*}
where the next-to-last inequality is proved in \cite[Proposition 3.1]{DLG}.
Hence

\begin{align*}
\abs{\Omega_i^+} \geq  \omega_n\left(\frac{p_i-1}{p_i\left(\lambda_{p_i}\right)^{\frac{1}{p_i}}}-\frac{1}{\beta^{\frac{p_i}{p_i-1}}}\right)^n .
\end{align*}
Thus, by
\begin{equation*}
    \limsup_k \Omega_{i_k}^+:= \bigcap_{h=1}^\infty\bigcup_{k=h}^\infty \Omega^+_{i_k} \subset \left\{u_\infty\ge 0\right\},
\end{equation*}
 and by Proposition \ref{autolimdirimezzo}, we have
\begin{equation*}
    \abs{\left\{u_\infty\ge 0\right\}} \geq \abs{ \limsup_k \Omega_{i_k}^+} \geq  \limsup_k \abs{\Omega_{i_k}^+} \geq \omega_n \left(s(\Omega)-\frac{1}{\beta}\right)^n
\end{equation*}
It remains to show that
\begin{equation}
    \label{claim}
    s(\Omega)-\frac{1}{\beta}  >0
\end{equation}
when $\beta > \frac{2}{D(\Omega)}$.

Firstly, let us observe that $1/\beta$ belongs to the set
\begin{equation*}
  \left\{t: \exists x_1, x_2\in \overline{\Omega} \text{ such that }  \abs{x_1-x_2}\ge 2t, \norma{C_{i,t}}_{L^\infty(\partial\Omega)}\le\frac{1}{\beta t} , \, i=1,2\right\},
\end{equation*}
so by the definition of $s(\Omega)$, it holds $s(\Omega)\ge 1/\beta$.
We claim that $s(\Omega)>1/\beta$. 

Let $\Omega_t= \{x\in \Omega : d(x,\partial\Omega)>t\}$ be the inner parallel set of $\Omega$, we define the function $h$ by 

$$h(t):=D(\Omega_t)-2\left(\frac{1+\beta t}{\beta}\right),$$

$h$ is a continuous function satisfying $h(0)>0$, so there exists $\overline{t}>0$ such that $h(\overline{t})>0.$ We want to show that
$$s(\Omega)\ge \frac{1+\beta \overline{t}}{\beta}>\frac{1}{\beta}.$$

To this end, we consider two points $x_1, x_2$ such that $|x_1-x_2|=D(\Omega_{\overline{t}})>2\left(\frac{1+\beta \overline{t}}{\beta}\right)$, and the functions
$$C_{i, \frac{1+\beta\overline{t}}{\beta}}=\frac{\left(\frac{1+\beta\overline{t}}{\beta}-\abs{x-x_i}\right)_+}{\frac{1+\beta\overline{t}}{\beta}},$$
it holds

$$\norma{C_{i, \frac{1+\beta\overline{t}}{\beta}}}_{L^\infty(\partial\Omega)}=\frac{1}{1+\beta\overline{t}}.$$
By the definition \eqref{ess} of $s(\Omega)$, the claim \eqref{claim} holds.

Similiar estimates hold for $\abs{\{u_\infty\le 0\}}$.

Let us observe that the function $u_\infty$ cannot be non-negative or non-positive. Indeed, if $u_\infty$ were non-negative, Theorem \ref{viscsol}, would imply that $u_\infty$ is a non-negative viscosity supersolution to $-\Delta_\infty u=0$. The Harnack inequality (see \cite{har2,harnack}) would imply $u_\infty>0$ in $\Omega$, against the fact that 
$\abs{\{u_\infty\le 0\}}>0$
\end{proof}

Once we have obtained Lemma \ref{segnpiu}, it is natural to ask whether a similar property holds in the case  $\beta \leq 2/D(\Omega)$.
In the following Lemma, we provide a positive answer for $\beta < 2/D(\Omega)$; nut it is still open if the same holds for  the case $\beta = 2/D(\Omega)$.

\begin{lemma}
    Let $\beta<\frac{2}{D(\Omega)}$, then the limit of the second eigenfunction of the $p$-Laplace operator with Robin boundary conditions satisfies
$$-\inf u_\infty =\sup{u_\infty}.$$
\end{lemma}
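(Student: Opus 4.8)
The plan is to exploit the variational characterization \eqref{var} together with the scaling behaviour of the cones $C_{i,t}$, much as in the proof of Lemma \ref{segnpiu}, but now observing that when $\beta < 2/D(\Omega)$ the geometric quantity $s(\Omega)$ is governed purely by the diameter constraint $|x_1-x_2|\ge 2t$ and the boundary constraint $\norma{C_{i,t}}_{L^\infty(\partial\Omega)}\le 1/(\beta t)$ becomes inactive for the optimal $t$. Indeed, for small $\beta$ the right-hand side $1/(\beta t)$ is large, so the constraint is satisfied trivially (recall $\norma{C_{i,t}}_\infty \le 1$), and hence $s(\Omega) = D(\Omega)/2$, which, as noted in the introduction, gives $\lambda_{2,\infty}(\Omega) = 2/D(\Omega) = \lambda_\infty^N(\Omega)$. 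This identification is the starting point.

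First I would fix the normalization $\norma{u_\infty}_{L^\infty(\Omega)} = 1$ coming from Proposition \ref{uniformconvergence}, and argue by contradiction: suppose $M := \sup u_\infty > -\inf u_\infty =: m$, so $M = 1 > m \ge 0$. The idea is to test the $p$-eigenvalue characterization \eqref{var} against a two-dimensional subspace $A = \mathrm{span}\{C_{1,t}, -\delta\, C_{2,t}\}$ (with a small parameter $\delta$ reflecting the asymmetry $m<M$) for a well-chosen $t$ close to $D(\Omega)/2$, to get an upper bound on $\limsup_p (\lambda_{2,p})^{1/p}$ strictly less than $2/D(\Omega)$ — contradicting Theorem \ref{autolimdiri} (whose full statement we may assume). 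Alternatively, and perhaps more cleanly, I would work on the two nodal regions directly: from Theorem \ref{viscsol}, $u_\infty$ is a viscosity solution of \eqref{prob:visc}; on $\{u_\infty > 0\}$ it solves a mixed Dirichlet–Robin $\infty$-eigenvalue problem with eigenvalue $\overline{\lambda}^{1/1}= \lambda_{2,\infty}(\Omega)= 2/D(\Omega)$, and by the characterization \eqref{rossi}–\eqref{mixcarvar} of the mixed problem this forces a lower bound on the "size" of $\{u_\infty>0\}$ in terms of $1/\lambda_{2,\infty} = D(\Omega)/2$; symmetrically for $\{u_\infty < 0\}$. The subtlety is that when $\beta < 2/D(\Omega)$ the set $\mathcal A$ in \eqref{rossi} is empty on each nodal domain, so the relevant quantity is the distance to the Dirichlet part of the boundary, i.e. the nodal set itself.

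Concretely, let $N = \{u_\infty = 0\}$ be the nodal set. On $\Omega^+ = \{u_\infty>0\}$, treating $N$ as Dirichlet boundary and $\partial\Omega\cap\overline{\Omega^+}$ as Robin boundary, the restriction of $u_\infty$ is (by Theorem \ref{viscsol}) a positive viscosity subsolution of the mixed problem with parameter $\lambda_{2,\infty}(\Omega)$; a comparison argument — comparing with the cone built on the largest admissible ball, exactly as in the mixed case of \cite{RS} — yields that there is a point $x_+ \in \Omega^+$ with $d(x_+, N) \ge s(\Omega) = D(\Omega)/2$ and, moreover, that $u_\infty(x_+)$ equals the maximum $M$ along the corresponding cone profile; the same on $\Omega^-$ produces $x_- $ with $d(x_-, N)\ge D(\Omega)/2$ and $|u_\infty(x_-)| = m$ along its cone. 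But then the straight segment from $x_+$ to $x_-$ has length at least... here is the point: $|x_+ - x_-| \le D(\Omega)$, while each of $x_\pm$ sits at distance $\ge D(\Omega)/2$ from $N$, which forces $x_+$ and $x_-$ to be "diametrically opposite", and the cone profiles emanating from them to have the same slope $1/s(\Omega)$; since along each cone the function rises linearly from $0$ on $N$ to its extreme value at $x_\pm$ at the same rate over the same distance $D(\Omega)/2$, we get $M = s(\Omega)\cdot(1/s(\Omega)) = 1$ and $m = s(\Omega)\cdot(1/s(\Omega)) = 1$, hence $M = m$, the desired symmetry.

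The main obstacle I anticipate is making the comparison step rigorous: one must show that a positive viscosity solution of the mixed $\infty$-eigenvalue problem on a nodal domain, with the sharp eigenvalue, is forced to attain the cone profile (i.e. equals $s(\Omega)^{-1} d(x, N)$ up to constants near its maximum), rather than merely being bounded by it. This requires a uniqueness/rigidity statement for the extremal mixed problem — essentially the equality case in \eqref{mixcarvar} — which is not stated in the excerpt and would have to be established, presumably via the strong comparison principle for $\Delta_\infty$ (Harnack, as used at the end of Lemma \ref{segnpiu}) applied on the level sets. A secondary technical point is handling the regularity of the nodal set $N$ and of $\partial\Omega \cap \overline{\Omega^\pm}$ so that the mixed-problem results apply; since $\Omega$ is only Lipschitz one may need to approximate or to work with the variational characterization \eqref{mixcarvar} throughout, avoiding the PDE formulation entirely — which is why the first, purely variational, route via testing \eqref{var} with an asymmetric pair of cones may ultimately be the safer one to write up.
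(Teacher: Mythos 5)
Both of your routes miss the mechanism that actually forces the symmetry, and neither can be repaired as written. Route 1 is vacuous: $\mathrm{span}\{C_{1,t},-\delta C_{2,t}\}=\mathrm{span}\{C_{1,t},C_{2,t}\}$, and the Rayleigh quotient in \eqref{var} is homogeneous, so the asymmetry parameter $\delta$ changes nothing; more fundamentally, the assumed asymmetry of $u_\infty$ never enters any bound on $\lambda_{2,p}$, and no upper bound strictly below $2/D(\Omega)$ can be produced, since $2/D(\Omega)$ is the true limit (it is also a lower bound via the Neumann eigenvalue, see \eqref{sminbeta}). Route 2 has a fatal gap that you partly acknowledge but underestimate: the mixed $\infty$-eigenvalue problem on a single nodal domain is homogeneous, i.e.\ if $v>0$ solves $\min\{\abs{\nabla v}-\overline{\lambda} v,-\Delta_\infty v\}=0$ with the mixed boundary conditions, so does $cv$ for every $c>0$. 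Hence no comparison or rigidity statement formulated nodal domain by nodal domain can pin down the amplitude on $\Omega^-$ relative to that on $\Omega^+$; the claimed conclusion ``$u_\infty$ attains the cone profile with slope $1/s(\Omega)$, hence $M=m=1$'' is false as a general statement about solutions on one nodal domain. The ratio of the two amplitudes is determined only by a global coupling across the nodal set, which your argument never supplies.

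The paper obtains exactly this coupling by an elementary device you do not use: take the constant test function $\varphi\equiv 1$ in the weak $p$-formulation \eqref{eq:debole} (admissible because the boundary condition is Robin), split $u_{p_i}=u_{p_i}^+-u_{p_i}^-$, rearrange so that all terms are nonnegative, and pass to the limit $p_i\to\infty$ using the $L^{p}\to L^\infty$ lemma of Section \ref{section_notion}. This yields the identity
\begin{equation*}
\max\{\beta \norma{u_\infty^+}_{L^\infty(\partial\Omega)}, \lambda_{2,\infty}(\Omega)\norma{u_\infty^-}_{L^\infty(\Omega)}\}
=\max\{\beta\norma{u_\infty^-}_{L^\infty(\partial\Omega)},\lambda_{2,\infty}(\Omega)\norma{u_\infty^+}_{L^\infty(\Omega)}\},
\end{equation*}
which couples the positive and negative parts globally. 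Since for $\beta<2/D(\Omega)$ one has $\lambda_{2,\infty}(\Omega)=2/D(\Omega)>\beta$ and $\norma{v}_{L^\infty(\partial\Omega)}\le\norma{v}_{L^\infty(\Omega)}$, a short case analysis excludes every configuration in which a boundary term realizes one of the maxima, leaving only $\norma{u_\infty^+}_{L^\infty(\Omega)}=\norma{u_\infty^-}_{L^\infty(\Omega)}$, i.e.\ $-\inf u_\infty=\sup u_\infty$. If you want to salvage your write-up, replace both routes by this integral-identity argument; any approach that works only on the viscosity problem restricted to the separate nodal domains cannot conclude, by the homogeneity obstruction above.
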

\begin{proof}
    Let us denote by
    $$u_{p_i}^+=\max\{u_{p_i}, 0\}, \quad u_{p_i}^-= \max\{-u_{p_i}, 0\}.$$
    We can rewrite the weak formulation \eqref{eq:debole} with $\varphi\equiv1$ in this way

    \begin{equation*}
    \begin{aligned}
         \beta^{p_i}\int_{\partial\Omega}\abs{u_{p_i}^+}^{p_i-2}u_{p_i}^+ \, d \mathcal{H}^{n-1}-\beta^{p_i}\int_{\partial\Omega}\abs{u_{p_i}^-}^{p_i-2}u_{p_i}^- \, d \mathcal{H}^{n-1} = \\
        \lambda_{2, p_i}(\Omega)\int_{\Omega} \abs{u_{p_i}^+}^{{p_i}-2} u_{p_i}^+  \, dx-\lambda_{2, p_i}(\Omega)\int_{\Omega} \abs{u_{p_i}^-}^{{p_i}-2} u_{p_i}^-  \, dx,
    \end{aligned} 
    \end{equation*}
    If we rearrange the terms and we let ${p_i}\to + \infty$, we obtain
 
\begin{equation*}
    \max\{\beta \norma{u_\infty^+}_{L^\infty(\partial\Omega)}, \lambda_{2,\infty} (\Omega)\norma{u_\infty^-}_{L^\infty(\Omega)}\}=\max\{\beta\norma{u_\infty^-}_{L^\infty(\partial\Omega)},\lambda_{2,\infty}(\Omega)\norma{u_\infty^+}_{L^\infty(\Omega)}\}. 
\end{equation*}
Moreover, if $\displaystyle{\beta\leq\frac{2}{D(\Omega)}}$
\begin{equation}
\label{sminbeta}
\frac{2}{D(\Omega)} = \lambda_{\infty}^N(\Omega) = \lim_{p_i} (\lambda_{{p_i}}^N(\Omega) )^{\frac{1}{{p_i}}}\leq\lim_{p_i}  (\lambda_{2,{p_i}}(\Omega))^{\frac{1}{{p_i}}}\leq \frac{1}{s(\Omega)} =  \frac{2}{D(\Omega)},\end{equation}
 
 as $D(\Omega)/2$ is admissible in the definition \eqref{ess} of $s(\Omega)$. 
 Indeed, if we consider the two endpoints of a diameter of the set $\Omega$, we have
 $$
 \abs{x_1-x_2} = 2 \frac{D(\Omega)}{2} \quad\text{ and } \quad \norma{C_{i,\frac{D(\Omega)}{2}}}_{L^\infty(\partial \Omega)} \leq 1\leq \frac{2}{\beta D(\Omega)}.
 $$

 Four cases can occur:

\begin{description}
    \item[1.] We have $$\begin{aligned}
        &\max\{\beta \norma{u_\infty^+}_{L^\infty(\partial\Omega)}, \lambda_{2,\infty} (\Omega)\norma{u_\infty^-}_{L^\infty(\Omega)}\}=\beta \norma{u_\infty^+}_{L^\infty(\partial\Omega)}=\\
    &\max\{\beta\norma{u_\infty^-}_{L^\infty(\partial\Omega)},\lambda_{2,\infty}(\Omega)\norma{u_\infty^+}_{L^\infty(\Omega)}\}=\beta\norma{u_\infty^-}_{L^\infty(\partial\Omega)},
    \end{aligned}$$  and, since $\beta < 2 /D(\Omega)$, the last would imply
$$\lambda_{2,\infty}(\Omega)\norma{u^-_\infty}_{L^\infty(\Omega)}\le\beta \norma{u_\infty^+}_{L^\infty(\partial\Omega)}= \beta\norma{u_\infty^-}_{L^\infty(\partial\Omega)}< \lambda_{2,\infty}(\Omega)\norma{u_\infty^-}_{L^\infty(\partial\Omega)},$$
that is absurd.
\item[2.]  We have
$$\begin{aligned}
        &\max\{\beta \norma{u_\infty^+}_{L^\infty(\partial\Omega)}, \lambda_{2,\infty} (\Omega)\norma{u_\infty^-}_{L^\infty(\Omega)}\}=\beta \norma{u_\infty^+}_{L^\infty(\partial\Omega)}=\\
    &\max\{\beta\norma{u_\infty^-}_{L^\infty(\partial\Omega)},\lambda_{2,\infty}(\Omega)\norma{u_\infty^+}_{L^\infty(\Omega)}\}=\lambda_{2,\infty}(\Omega)\norma{u_\infty^+}_{L^\infty(\Omega)},
    \end{aligned}$$
     and similarly to the first case, we get an absurd
    $$\beta \norma{u_\infty^+}_{L^\infty(\partial\Omega)}=\lambda_{2,\infty}(\Omega)\norma{u_\infty^+}_{L^\infty(\Omega)}>\beta\norma{u_\infty^+}_{L^\infty(\Omega)}.$$

    \item[3.]  We have $$\begin{aligned}
        &\max\{\beta \norma{u_\infty^+}_{L^\infty(\partial\Omega)}, \lambda_{2,\infty}(\Omega) \norma{u_\infty^-}_{L^\infty(\Omega)}\}=\lambda_{2,\infty} (\Omega)\norma{u_\infty^-}_{L^\infty(\Omega)}=\\
    &\max\{\beta\norma{u_\infty^-}_{L^\infty(\partial\Omega)},\lambda_{2,\infty}(\Omega)\norma{u_\infty^+}_{L^\infty(\Omega)}\}=\beta\norma{u_\infty^-}_{L^\infty(\partial\Omega)},
    \end{aligned}$$
    but, arguing as before, this would imply $\norma{u_\infty^-}_{L^\infty(\partial\Omega)}>\norma{u_\infty^-}_{L^\infty(\Omega)}$.
    \item[4] The only case that can occur is 
    $$\norma{u_\infty^-}_{L^\infty(\Omega)}=\norma{u_\infty^+}_{L^\infty(\Omega)},$$
    that is the claim.
\end{description}

\end{proof}

\section{The geometric characterization}
\label{geometric}
Now we complete the proof of Theorem \ref{autolimdiri}.
\begin{proof}[Proof of Theorem \ref{autolimdiri}] Proposition \ref{autolimdirimezzo} gives
$$
\limsup_p (\lambda_{2,p}(\Omega))^\frac{1}{p}\leq \frac{1}{s(\Omega)},
$$
so it remains to prove

$$
\liminf_p (\lambda_{2,p}(\Omega))^\frac{1}{p}\geq \frac{1}{s(\Omega)}.
$$

We distinguish the cases  $\beta \leq 2/D(\Omega)$ and  $\beta > 2/D(\Omega)$.
In the case   $\beta \leq 2/D(\Omega)$ the thesis is given by \eqref{sminbeta}. 

    To prove the claim  in the case $\beta > 2/D(\Omega)$, we use $(u_{p})_+$, normalized so that $\norma{(u_{p})_+}_{L^p(\Omega^+_p)}=1$, in the weak formulation \eqref{eq:debole} with $\lambda=\lambda_{2,p}(\Omega)$, obtaining
    \[
    \liminf_p (\lambda_{2,p}(\Omega))^\frac{1}{p} = \liminf_p \left(\frac{\displaystyle{\int_{\Omega_p^+} \abs{\nabla u_p}^p+ \beta^p \int_{\Omega^+_p\cap \partial \Omega} u^p}}{\displaystyle{\int_{\Omega^+_p} u^p}}\right)^{\frac{1}{p}}.
    \]
    By the uniform convergence of $u_{p}$, stated in Proposition \ref{uniformconvergence}, and by Lemma \ref{segnpiu}, we have that both $u^+_{p}$ and $u^-_{p}$ uniformly converge to $u^+_{\infty}\ne 0$ and $u^-_{\infty}\ne 0$, respectively (for instance write $2u^+_{p}= \abs{u_p}+u_p$).
    Since $(\nabla u_p)\chi_{\Omega^+_p}$ and  $(\nabla u_p)\chi_{\Omega^-_p}$  are equi-bounded in $L^q(\Omega)$ for all $q$ (see Proposition \ref{uniformconvergence}), up to a subsequence they converge to $(\nabla u_\infty)\chi_{\Omega^+_\infty}$ and  $(\nabla u_\infty)\chi_{\Omega^-_\infty}$ weakly in $L^q(\Omega)$.
Hence, the following inequality holds
\begin{align*}
    \frac{\norma{\nabla u_\infty}_{L^q(\Omega^+_\infty)}}{\norma{u_\infty}_{L^q(\Omega^+_\infty)}}&\le \liminf_{p \to \infty} \frac{\norma{\nabla u_p}_{L^q(\Omega^+_\infty)}}{\norma{u_p}_{L^q(\Omega^+_\infty)}}\le
     \liminf_{p \to \infty} \frac{\norma{\nabla u_p}_{L^p(\Omega^+_\infty)}}{\norma{u_p}_{L^q(\Omega^+_\infty)}}  \abs{\Omega}^{\frac{1}{q}-\frac{1}{p}}\\ &\le  \frac{\abs{\Omega}^{\frac{1}{q}}}{\norma{u_\infty}_{L^q(\Omega^+_\infty)}} \liminf_{p \to \infty} \left( \lambda_{2,p}(\Omega)\right)^{\frac{1}{p}}.
\end{align*}
Letting $q\to\infty$, we obtain $$\norma{\nabla u_\infty}_{L^\infty(\Omega^+_\infty)}\le \liminf_{p \to \infty} \left( \lambda_{2,p}(\Omega)\right)^{\frac{1}{p}} .$$

Similarly
$$
\beta \norma{ u_p}_{L^q(\partial\Omega^+_\infty \cap \partial \Omega)} \leq \beta \norma{\ u_p}_{L^p(\partial \Omega^+_\infty \cap \partial\Omega)} \abs{\partial\Omega}^{\frac{1}{q} - \frac{1}{p}} \leq(\lambda_{2,p}(\Omega))^{1/p} \abs{\partial\Omega}^{\frac{1}{q} - \frac{1}{p}}\leq  C,
$$
gives us
$$
\beta \norma{u_\infty}_{L^\infty(\partial \Omega^+_\infty \cap \partial\Omega)}\le \liminf_{p \to \infty} \left( \lambda_{2,p}(\Omega)\right)^{\frac{1}{p}},
$$
  hence
    $$
     \liminf_p (\lambda_{2,p}(\Omega))^\frac{1}{p} \geq \inf_{\substack{w \in W^{1,\infty}(\Omega) \\
     w|_{\partial\Omega^+_\infty \cap \Omega }=0}}\frac{\max\{ \norma{\nabla w}_\infty, \beta \norma{ w}_{L^\infty(\partial \Omega^+_\infty \cap \partial\Omega)}\}}{\norma{u}_\infty} = \lambda_\infty^{\text{mix}}(\Omega^+_\infty),
    $$
    where $\lambda_\infty^{\text{mix}}(\Omega^+_\infty)$ is the one defined in \cite{RS2} and reported in formulas \eqref{rossi} and \eqref{mixcarvar}.
    
    An analogous argument for $\lambda_\infty^{\text{mix}}(\Omega^-_\infty)$, brings us to 
    $$
     \liminf_p (\lambda_{2,p}(\Omega))^\frac{1}{p} \geq\max\{\lambda_\infty^{\text{mix}}(\Omega^+_\infty),\lambda_\infty^{\text{mix}}(\Omega^-_\infty)\}=: \tilde{\lambda}.
    $$
   Let us set $\Gamma_1^\pm=\partial \Omega^\pm_\infty \cap\Omega$ and $\Gamma_2^\pm= \partial \Omega^\pm_\infty\cap \partial\Omega$. 
Analogously, we define the sets

$$\mathcal{A}_\pm=\left\{x\in \overline{\Omega^\pm_\infty} : d(x, \Gamma_1^\pm)\ge \frac{1}{\beta}+d(x,\Gamma_2^\pm)\right\}.$$

Let us distinguish the following cases
\begin{description}
    \item[Case (1)]$\mathcal{A}_\pm\neq \emptyset$: we consider
    $$x_\pm\in \argmin_{x\in \mathcal{A}_\pm } \frac{1}{\frac{1}{\beta}+d(x,\Gamma_2^\pm)},$$
    then
    $$d(x_\pm,\Gamma^\pm_1)\ge \frac{1}{\beta}+d(x_\pm,\Gamma^\pm_2)\ge\frac{1}{\tilde\lambda}$$
    and the function $C_{\pm, \frac{1}{\tilde \lambda}}$ satisfies
    $$\abs{\nabla C_{\pm, \frac{1}{\tilde\lambda}}}=\tilde\lambda, \quad C_{\pm, \frac{1}{\tilde \lambda}}=0 \text{ on } \Gamma_1^\pm, \quad \beta\norma{C_{\pm, \frac{1}{\tilde \lambda}}}_{L^\infty(\Gamma^\pm_2)}\le\tilde \lambda.$$
    
    \item[Case (1)]$\mathcal{A}_\pm= \emptyset$: we consider
    $$x_\pm\in \argmin_{x\in N_\pm } \frac{1}{d(x,\Gamma^\pm_1)},$$
    then
    $$\frac{1}{\tilde \lambda}\le d(x_\pm,\Gamma^\pm_1)\le \frac{1}{\beta}+d(x_\pm,\Gamma^\pm_2)$$
    and the function $C_{\pm, \frac{1}{\tilde\lambda}}$ satisfies
    $$\abs{\nabla C_{\pm, \frac{1}{\tilde \lambda}}}=   \tilde \lambda, \quad C_{\pm, \frac{1}{\tilde \lambda}}=0 \text{  on } \Gamma_1^\pm, \quad \beta\norma{C_{\pm, \frac{1}{\tilde \lambda}}}_{L^\infty(\Gamma^\pm_2)}<\tilde \lambda.$$     
\end{description}

From Case 1 and 2 we deduce the existence of two points $x_+\in \Omega^+_\infty$ and $x_-\in \Omega^-_\infty$ such that
$$|x_+-x_-|\ge |x_+-y_+|+|x_--y_-|\ge d(x_+, \Gamma_1^-)+d(x_-, \Gamma_1^-)\ge \frac{2}{\tilde \lambda}$$ with $y_+\in \Gamma_1^+$ and $y_-\in \Gamma_1^-$, 
and the associated cones satisfies
$$\norma{C_{\pm, \frac{1}{\tilde\lambda}}}_{L^\infty(\Gamma^\pm_2)}<\frac{\tilde \lambda}{\beta}.$$
This means that $\displaystyle \frac{1}{\tilde \lambda} \leq s(\Omega)$ by the definition of $s(\Omega)$ and we get  
\begin{equation}
    \label{mixmaggiore}
      \liminf_p (\lambda_{2,p}(\Omega))^\frac{1}{p} \geq\max\{\lambda_\infty^{\text{mix}}(\Omega^+_\infty),\lambda_\infty^{\text{mix}}(\Omega^-_\infty)\}\geq \frac{1}{s(\Omega)},
\end{equation}
and the proof is complete.
\end{proof}

\section{Variational Characterization of \texorpdfstring{$\lambda_{2,\infty}$}{lam}}
\label{varcar_sec}

We are now in a position to prove Theorem \ref{var:carteorema}.

We recall the definition of the set $\mathcal{S}=\left\{v\in W^{1,\infty}(\Omega): \norma{v}_{L^\infty(\Omega)}=1\right\}$, and we call $u_{1,\infty}$ the limit of the first Robin $p$-eigenfunction. In \cite{AMNT} it was proved that $u_{1,\infty}$ is a positive infinity eigenfunction  and a minimum of the functional

\begin{equation*}
     \max\{\norma{\nabla u}_{L^\infty(\Omega)}, \beta \norma{u}_{L^\infty(\partial\Omega)}\},
\end{equation*}
among all ${u\in\mathcal{S}}$.
\begin{proof}[Proof of Theorem \ref{var:carteorema}]
    For all $\gamma\in \Gamma$ there exists $t_0\in [0,1]$ such that $u=\gamma(t_0)$ satisfies

    $$\norma{u_+}_{L^\infty(\Omega)}=\norma{u_-}_{L^\infty(\Omega)}=1.$$
    Let $\Omega^+=\{u>0\}$, $\Omega^-=\{u<0\}$ and let $N^+\subset\Omega^+$, $N^-\subset\Omega^-$ be two nodal domains such that
    $\norma{u_\pm}_{L^\infty(N^\pm)}=1$. Then,

    \begin{equation*}
    \begin{aligned}
        &\max\left\{\norma{\nabla u}_{L^\infty(\Omega)}, \beta \norma{u}_{L^\infty(\partial\Omega)}\right\}\ge \max\left\{\norma{\nabla u_+}_{L^\infty(\Omega)}, \beta \norma{u_+}_{L^\infty(\partial\Omega)}\right\}\ge\\
        &\max\left\{\norma{\nabla u_+}_{L^\infty(N^+)}, \beta \norma{u_+}_{L^\infty(\partial N^+)}\right\}\ge \lambda_{\infty}^{\text{mix}}(N^+),
        \end{aligned}
    \end{equation*}
    where the mixed eigenvalue $\lambda_\infty^{\text{mix}}$ is the one studied in \cite{RS} obtained by choosing $\Gamma_1^+=\partial N^+ \cap\Omega$ and $\Gamma_2^+=\partial N^+ \cap\partial\Omega$. If we replace $N^+$ with $N^-$, we obtain
    \begin{equation*}
    \max\left\{\norma{\nabla u}_{L^\infty(\Omega)}, \beta \norma{u}_{L^\infty(\partial\Omega)}\right\}\ge \lambda_\infty^{\text{mix}}(N^-).
    \end{equation*}
    
    {Arguning as in the proof of Theorem \eqref{autolimdiri}, see formula \eqref{mixmaggiore}, we have }

    $$\max\{\lambda_\infty^{\text{mix}}(N^+), \lambda_\infty^{\text{mix}}(N^-)\}\ge \lambda_{2,\infty}(\Omega).$$

    So,
    $$\lambda_{2,\infty}(\Omega)\le \max\left\{\norma{\nabla u}_{L^\infty(\Omega)}, \beta \norma{u}_{L^\infty(\partial\Omega)}\right\}\le \sup_{u\in \gamma}\max\left\{\norma{\nabla u}_{L^\infty(\Omega)}, \beta \norma{u}_{L^\infty(\partial\Omega)}\right\},$$
that gives
$$\lambda_{2,\infty}(\Omega)\le \inf_{\gamma\in \Gamma}\sup_{u\in \gamma}\max\left\{\norma{\nabla u}_{L^\infty(\Omega)}, \beta \norma{u}_{L^\infty(\partial\Omega)}\right\}.$$

To prove the other inequality, we consider a path $\gamma\in \Gamma$ such that for all $u\in \gamma$

$$\max\left\{\norma{\nabla u}_{L^\infty(\Omega)}, \beta \norma{u}_{L^\infty(\partial\Omega)}\right\}\le \lambda_{2,\infty}(\Omega).$$
By the definition of $\lambda_{2,\infty}(\Omega)$ \eqref{ess}, there exist two points $x_1,x_2\in \overline{\Omega}$ such that $$\abs{x_1-x_2}\ge \frac{2}{\lambda_{2,\infty}(\Omega)}, \quad \beta\norma{C_{i, \frac{1}{\lambda_{2,\infty}(\Omega)}}}_{L^\infty(\partial\Omega)}\le \lambda_{2,\infty}(\Omega).$$

Let $u_1$ be the limit of the first Robin $p$-eigenfunctions, and let us define, for $t\in [0,1]$, 
\begin{align*}
    \gamma_1(t)&=\max\{u_1, tC_1\},\\
    \gamma_2(t)&=\max\{(1-t)u_1, C_1\},\\
    \gamma_3(t)&= C_1-tC_2,\\
    \gamma_4(t)&=(1-t)C_1-C_2,\\
    \gamma_5(t)&=\min\{-C_2, -tu_1\},\\
    \gamma_6(t)&=\min\{-(1-t)C_2, -u_1\},
\end{align*}
the path $\gamma$ will be obtained by gluing together these six paths. 

Let us observe that, for all $t\in [0,1]$,

$$\norma{\nabla \gamma (t)}_{L^\infty(\Omega)}\le \max\{\norma{\nabla u_1}_{L^\infty(\Omega)}, \norma{\nabla C_i}_{L^\infty(\Omega)}\}\le \lambda_{2,\infty}(\Omega),$$

as 
$$\begin{gathered}
    \norma{\nabla C_i}_{L^\infty(\Omega)}=\lambda_{2,\infty}(\Omega), \\
    \norma{\nabla u_1}_{L^\infty(\Omega)}\le \max\{\norma{\nabla u_1}_{L^\infty(\Omega)}, \beta\norma{u_1}_{L^\infty(\partial\Omega)}\} {=\lambda_{1,\infty}(\Omega)}\le \lambda_{2,\infty}(\Omega),
\end{gathered}
$$
\begin{center}
    and
\end{center}
$$\beta\norma{ \gamma (t)}_{L^\infty(\partial\Omega)}\le \beta \max\{\norma{ u_1}_{L^\infty(\partial\Omega)}, \norma{C_i}_{L^\infty(\partial\Omega)}\}\le \lambda_{2,\infty}(\Omega),$$
which concludes the proof. 
\end{proof}

 As a consequence of Theorem \ref{var:carteorema}, we also have the following variational characterization
\begin{corollario}
    Let $\Omega$ be an open, bounded and Lipschitz set. Then 
\begin{equation*}
        \lambda_{2,\infty} (\Omega) = \inf_{w \in \mathcal{O}} \frac{\max\{\norma{\nabla w}_{L^\infty(\Omega)}, \beta\norma{w}_{L^\infty(\partial\Omega)}\}}{\norma{ w}_{L^\infty(\Omega)}},
\end{equation*}
where 
$$
\mathcal{O}= \{ w \in W^{1,\infty}(\Omega): \, \norma{ w_+}_{L^\infty(\Omega)}=\norma{ w_-}_{L^\infty(\Omega)}\}
$$
\end{corollario}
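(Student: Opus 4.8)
The plan is to prove the two inequalities separately, after a normalisation. Both the quotient $w\mapsto \max\{\norma{\nabla w}_{L^\infty(\Omega)},\beta\norma{w}_{L^\infty(\partial\Omega)}\}/\norma{w}_{L^\infty(\Omega)}$ and the constraint $w\in\mathcal{O}$ are invariant under $w\mapsto cw$, $c\neq 0$, so the infimum may be computed over $w\in\mathcal{O}$ with $\norma{w}_{L^\infty(\Omega)}=1$; for such $w$ one has $\max\{\norma{w_+}_{L^\infty(\Omega)},\norma{w_-}_{L^\infty(\Omega)}\}=1$ and, by the defining property of $\mathcal{O}$, $\norma{w_+}_{L^\infty(\Omega)}=\norma{w_-}_{L^\infty(\Omega)}=1$. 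Thus it suffices to show that $\lambda_{2,\infty}(\Omega)$ equals the infimum of $\max\{\norma{\nabla w}_{L^\infty(\Omega)},\beta\norma{w}_{L^\infty(\partial\Omega)}\}$ over all $w\in W^{1,\infty}(\Omega)$ with $\norma{w_+}_{L^\infty(\Omega)}=\norma{w_-}_{L^\infty(\Omega)}=1$.

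For ``$\le$'' I would exhibit an explicit competitor. Let $s=s(\Omega)$ and choose $x_1,x_2\in\overline\Omega$ realising \eqref{ess}, so $\abs{x_1-x_2}\ge 2s$ and $\norma{C_{i,s}}_{L^\infty(\partial\Omega)}\le\frac1{\beta s}$ for $i=1,2$. Put $w=C_{1,s}-C_{2,s}$. Since $B(x_1,s)$ and $B(x_2,s)$ are disjoint, $w_+=C_{1,s}$ and $w_-=C_{2,s}$, so $w$ is admissible with $\norma{w}_{L^\infty(\Omega)}=1$; moreover $\norma{\nabla w}_{L^\infty(\Omega)}=\frac1s=\lambda_{2,\infty}(\Omega)$ and $\beta\norma{w}_{L^\infty(\partial\Omega)}=\beta\max_i\norma{C_{i,s}}_{L^\infty(\partial\Omega)}\le\frac1s=\lambda_{2,\infty}(\Omega)$, which gives the bound. (Equivalently, one may invoke Theorem \ref{var:carteorema}: along the near-optimal path $\gamma\in\Gamma$ built in its proof, the continuous function $t\mapsto\norma{\gamma(t)_+}_{L^\infty(\Omega)}-\norma{\gamma(t)_-}_{L^\infty(\Omega)}$ changes sign between $t=0$ and $t=1$, so some $\gamma(t_0)$ lies in $\mathcal{O}$ and realises the bound.)

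For ``$\ge$'' I would reuse the first part of the proof of Theorem \ref{var:carteorema}, which in fact shows the following: \emph{any} $v\in W^{1,\infty}(\Omega)$ with $\norma{v_+}_{L^\infty(\Omega)}=\norma{v_-}_{L^\infty(\Omega)}=1$ satisfies $\max\{\norma{\nabla v}_{L^\infty(\Omega)},\beta\norma{v}_{L^\infty(\partial\Omega)}\}\ge\lambda_{2,\infty}(\Omega)$. Indeed, by continuity of $v$ on the compact set $\overline\Omega$ there are connected components $N^+$ of $\{v>0\}$ and $N^-$ of $\{v<0\}$ with $\norma{v}_{L^\infty(N^\pm)}=1$; on $N^\pm$ the function $\pm v$ vanishes on $\partial N^\pm\cap\Omega$, hence is admissible in the variational characterisation \eqref{mixcarvar} of $\lambda_\infty^{\text{mix}}(N^\pm)$ with $\Gamma_1^\pm=\partial N^\pm\cap\Omega$ and $\Gamma_2^\pm=\partial N^\pm\cap\partial\Omega$, so $\lambda_\infty^{\text{mix}}(N^\pm)\le\max\{\norma{\nabla v}_{L^\infty(\Omega)},\beta\norma{v}_{L^\infty(\partial\Omega)}\}$. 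Since $N^+$ and $N^-$ are disjoint, the case analysis preceding \eqref{mixmaggiore} applies with no change and yields $\max\{\lambda_\infty^{\text{mix}}(N^+),\lambda_\infty^{\text{mix}}(N^-)\}\ge\lambda_{2,\infty}(\Omega)$. Applying this with $v=w$ concludes the proof.

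The only genuinely delicate step is this last one: one must recentre the mixed cones associated to the two disjoint subdomains $N^\pm$ at points $x_\pm\in N^\pm$ for which $\abs{x_+-x_-}\ge 2/\max\{\lambda_\infty^{\text{mix}}(N^+),\lambda_\infty^{\text{mix}}(N^-)\}$ while controlling their boundary values, so that $1/\max\{\lambda_\infty^{\text{mix}}(N^+),\lambda_\infty^{\text{mix}}(N^-)\}$ is admissible in \eqref{ess}. This is exactly the content of the discussion of the cases $\mathcal{A}_\pm\neq\emptyset$ and $\mathcal{A}_\pm=\emptyset$ in the proof of Theorem \ref{autolimdiri}, and it works because $\overline{N^\pm}\subset\overline\Omega$ and the distance bound there involves only the Dirichlet parts $\Gamma_1^\pm$. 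The remaining point — existence of components $N^\pm$ on which $v$ (resp. $-v$) has sup norm $1$, or at least $1-\varepsilon$ for arbitrary $\varepsilon>0$, which is enough after letting $\varepsilon\to0$ — is routine from continuity of $w$ and compactness of $\overline\Omega$. As all the needed ingredients already appear in the proofs of Theorems \ref{autolimdiri} and \ref{var:carteorema}, no new difficulty arises.
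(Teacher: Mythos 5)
Your proposal is correct and follows essentially the route the paper intends: the paper states the corollary as a direct consequence of Theorem \ref{var:carteorema}, and your two ingredients --- the lower bound via the first half of that theorem's proof (nodal domains, $\lambda_\infty^{\text{mix}}(N^\pm)$, and the case analysis leading to \eqref{mixmaggiore}) and the upper bound via the competitor $C_{1,s}-C_{2,s}$ built from the optimal cones in \eqref{ess} --- are exactly the steps already contained in the proofs of Theorems \ref{autolimdiri} and \ref{var:carteorema}. Your extra remark on replacing sup norm $1$ by $1-\varepsilon$ on the nodal components is a harmless (indeed slightly more careful) refinement of the same argument.
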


\section{The \texorpdfstring{$\infty$}{inf}-eigenvalue problem}
\label{infty_eigenproblem}

{
Now, with the additional assumption on the regularity of $\Omega$, we prove Theorem \ref{secondodavvero}.

\begin{proof}[Proof of Theorem \ref{secondodavvero}]
Let $u$ be the eigenfunction corresponding to $\lambda$ and let $\Omega^+=\{x \in \Omega : \, u > 0\}$ and $\Omega^-=\{x \in \Omega : \, u < 0\}$. Let us normalize the eigenfunction $u$ such that
$$
\max_{x \in \Omega^+} u(x)= \frac{1}{\lambda}.
$$
Then $u$ satisfies, in viscosity sense, 
$$
\begin{cases}
    \min\Set{\abs{\nabla u} - 1, - \Delta_\infty u}\leq 0 &\text{ in } \Omega^+\\
    \displaystyle{\frac{\partial u}{\partial \nu}} \leq 0 &\text{ on } \partial\Omega^+ \cap \partial \Omega \\
    u=0 &\text{ on } \partial\Omega^+ \cap \Omega.
\end{cases}
$$
We are in the same situation of Proposition 1 in \cite{EKNT}, hence, for all $x_0  \in \Omega \setminus\Omega^+$ we can consider  $$g_{\varepsilon, \gamma}(x) =(1+\varepsilon)\abs{ x- x_0} -\gamma \abs{x-x_0}^2 $$ as test function in the definition of viscosity subsolution, obtaining $g_{\varepsilon,\gamma}(x)\ge u$, for all $\varepsilon>0$, and $\gamma<\frac{\varepsilon}{2D(\Omega)}$, so if we pass to the limit as $\varepsilon$ goes to 0, we get 
\begin{equation}
\label{quasineum}
\abs{x-x_0} \geq u  \quad \forall x \in \overline{\Omega^+} , \quad \forall x_0 \in \overline{\Omega^-}.
\end{equation}
On the other hand, $u$ is a viscosity subsolution to

\begin{equation}
\label{comparison}
\begin{cases}
    \min\Set{\abs{\nabla u} - 1, - \Delta_\infty u}= 0 &\text{ in } \Omega^+\\
    -{\rm {min}}  \left\{| \nabla u| - \beta u, \displaystyle{-\frac{\partial u }{\partial \nu}}\right\} = 0 &\text{ on } \partial\Omega^+.
\end{cases}
\end{equation}

Hence, arguing as in Theorem 3.6 in \cite{AMNT}, we consider
for every $\varepsilon >0 $ and $0<\gamma <\frac{\varepsilon}{D(\Omega)}$, the  supersolution to \eqref{comparison} $$h_{\varepsilon, \gamma}(x) = \frac{1}{\beta} + (1+ \varepsilon) d(x , \partial \Omega^+ ) - \gamma d(x , \partial \Omega^+)^2,$$
obtaining $h_{\varepsilon,\gamma}(x)\ge u$, for all $\varepsilon, \gamma$. 
Then it follows, by letting $\varepsilon$, thus $\gamma$, to $0$, that 
\begin{equation}
    \label{quasirobin}
    \frac{1}{\beta} + d(x,\partial \Omega^+ ) \geq u(x) \qquad \forall x \in \Omega^+.
\end{equation}

Combining \eqref{quasineum} and \eqref{quasirobin}, we get
\begin{equation}
\label{quasitutto}
u(x)\leq {\rm min} \left\{d(x , \Gamma_1^+), \frac{1}{\beta} + d(x, \partial\Omega^+)\right\} \leq  {\rm min} \left\{d(x , \Gamma_1^+), \frac{1}{\beta} + d(x, \Gamma_2^+)\right\}=:g(x),
\end{equation}
where 
$$
\Gamma_1^+= \partial \Omega^+ \cap \Omega \quad \text{ and }\quad \Gamma_2^+ = \partial \Omega^+ \cap \partial\Omega.
$$
Since $g(x)$ belongs to the space $X$ defined in \eqref{def:spacex}, we have 
\begin{equation*}
    \lambda_\infty^{\text{mix}}(\Omega^+) \leq \frac{\max \left\{ \norma{\nabla g}_{L^\infty(\Omega^+) }, \beta \norma{ g}_{L^\infty(\Gamma_2^+) }\right\}}{\norma{ g}_{L^\infty(\Omega^+) }},
\end{equation*}
and so also 
\begin{equation*}
   \norma{ g}_{L^\infty(\Omega^+) } \leq \frac{\max \left\{ \norma{\nabla g}_{L^\infty(\Omega^+) }, \beta \norma{ g}_{L^\infty(\Gamma_2^+) }\right\}}{\lambda_\infty^{\text{mix}}(\Omega^+)} \leq\frac{1}{\lambda_\infty^{\text{mix}}(\Omega^+)}.
\end{equation*}
Then \eqref{quasitutto} and the latter imply
\begin{equation}
\label{parte1}
    \frac{1}{\lambda} = \max_{x \in \Omega^+}u(x) \leq  \max_{x \in \Omega^+}g(x) \le \frac{1}{\lambda_{\infty}^{\text{mix}}(\Omega^+)}.
\end{equation}

    Normalizing  $u$ such that
$$
\max_{x \in \Omega^-}( -u(x))= \frac{1}{\lambda},
$$
and arguing in the same way, we obtain

\begin{equation*}
-u(x)\leq  {\rm min} \left\{d(x , \Gamma_1^-), \frac{1}{\beta} + d(x, \Gamma_2^-)\right\}=:h(x),
\end{equation*}
where 
$$
\Gamma_1^-= \partial \Omega^- \cap \Omega \quad \text{ and }\quad \Gamma_2^- = \partial \Omega^- \cap \partial\Omega.
$$
Hence, we get 
\begin{equation}
\label{parte2}
    \frac{1}{\lambda} = \max_{x \in \Omega^-}(-u(x)) \leq  \max_{x \in \Omega^-}h(x) \le\frac{1}{\lambda_{\infty}^{\text{mix}}(\Omega^-)}.
\end{equation}
By \eqref{parte1}, \eqref{parte2} and \eqref{mixmaggiore}, we get
$$
\lambda \geq \max\{ \lambda_{\infty}^{\text{mix}}(\Omega^+),\lambda_{\infty}^{\text{mix}}(\Omega^-)\}\geq \lambda_{2,\infty}(\Omega)
,$$
as we want to show. \qedhere
\end{proof}

\begin{oss}
    In the case $\beta\le \frac{2}{D(\Omega)}$, in which the eigenvalue $\lambda_{2,\infty}(\Omega)$ coincides with the Neumann one, it is sufficient to establish 
    $$u\le d(x,\Gamma_1^+), \text{ in } \Omega^+, \quad -u\le d(x,\Gamma_1^-), \text{ in } \Omega^-,$$
    to obtain the claim. For this purpose, the convexity assumption is the only necessary one. 

    The $C^2$ assumption is needed only in the case $\beta>\frac{2}{D(\Omega)}$, to conduct the same analysis we did in \cite{AMNT}.
\end{oss}

}
\section{Some examples}
For some special shapes, we can explicitly write $s(\Omega)$ in terms of other geometric quantities of the set. 

\begin{esempio}
    Let $\Omega\subset\R^2$ be a stadium, i.e., the convex hull of two congruent disjoint balls, as represented in Figure \ref{fig:stadio}, then

\begin{equation*}
    \frac{1}{s(\Omega)}=\lambda_{2,\infty}(\Omega)=\begin{cases}
        \dfrac{1}{\frac{1}{\beta}+r(\Omega)} & \text{if } \beta\ge \frac{2}{D(\Omega)-4r(\Omega)}\\[2ex]
        \dfrac{2\beta}{1+\beta \frac{D(\Omega)}{2}} & \text{if } \frac{2}{D(\Omega)}\le\beta<\frac{2}{D(\Omega)-4r(\Omega)}\\[2ex]
        \dfrac{2}{D(\Omega)} & \text{if } \beta<\frac{2}{D(\Omega)}.
    \end{cases}
\end{equation*}

We observe that $\lambda_{2,\infty}$ is a continuous function of $\beta$ and in the case that the stadium is thin enough (i.e., $D-4r(\Omega)>0$), the second eigenvalue coincides with the first one. 
\begin{figure}[h!]
\begin{center}
\tikzset{every picture/.style={line width=0.75pt}} 

\begin{tikzpicture}[x=0.75pt,y=0.75pt,yscale=-1,xscale=1]

\draw  [color={rgb, 255:red, 155; green, 155; blue, 155 }  ,draw opacity=1 ] (360.35,179.9) .. controls (360.35,152.34) and (382.69,130) .. (410.25,130) .. controls (437.81,130) and (460.15,152.34) .. (460.15,179.9) .. controls (460.15,207.46) and (437.81,229.8) .. (410.25,229.8) .. controls (382.69,229.8) and (360.35,207.46) .. (360.35,179.9) -- cycle ;
\draw  [color={rgb, 255:red, 155; green, 155; blue, 155 }  ,draw opacity=1 ] (100.35,179.9) .. controls (100.35,152.34) and (122.69,130) .. (150.25,130) .. controls (177.81,130) and (200.15,152.34) .. (200.15,179.9) .. controls (200.15,207.46) and (177.81,229.8) .. (150.25,229.8) .. controls (122.69,229.8) and (100.35,207.46) .. (100.35,179.9) -- cycle ;
\draw  [draw opacity=0] (150.25,229.8) .. controls (150.25,229.8) and (150.25,229.8) .. (150.25,229.8) .. controls (122.5,229.8) and (100,207.46) .. (100,179.9) .. controls (100,152.34) and (122.5,130) .. (150.25,130) -- (150.25,179.9) -- cycle ; \draw   (150.25,229.8) .. controls (150.25,229.8) and (150.25,229.8) .. (150.25,229.8) .. controls (122.5,229.8) and (100,207.46) .. (100,179.9) .. controls (100,152.34) and (122.5,130) .. (150.25,130) ;  
\draw    (150.25,130) -- (410.25,130) ;
\draw  [draw opacity=0] (410.25,130) .. controls (410.25,130) and (410.25,130) .. (410.25,130) .. controls (438,130) and (460.5,152.34) .. (460.5,179.9) .. controls (460.5,207.46) and (438,229.8) .. (410.25,229.8) -- (410.25,179.9) -- cycle ; \draw   (410.25,130) .. controls (410.25,130) and (410.25,130) .. (410.25,130) .. controls (438,130) and (460.5,152.34) .. (460.5,179.9) .. controls (460.5,207.46) and (438,229.8) .. (410.25,229.8) ;  
\draw    (150.25,229.8) -- (410.25,229.8) ;
\draw [color={rgb, 255:red, 155; green, 155; blue, 155 }  ,draw opacity=1 ]   (150.25,179.9) -- (150.25,130) ;
\draw    (279.52,72.8) -- (280.5,249.8) ;
\draw [shift={(279.5,69.8)}, rotate = 89.68] [fill={rgb, 255:red, 0; green, 0; blue, 0 }  ][line width=0.08]  [draw opacity=0] (8.93,-4.29) -- (0,0) -- (8.93,4.29) -- cycle    ;
\draw    (54.35,179.9) -- (510.96,179.9) ;
\draw [shift={(513.96,179.9)}, rotate = 180] [fill={rgb, 255:red, 0; green, 0; blue, 0 }  ][line width=0.08]  [draw opacity=0] (8.93,-4.29) -- (0,0) -- (8.93,4.29) -- cycle    ;
\draw  [color={rgb, 255:red, 251; green, 0; blue, 251 }  ,draw opacity=1 ] (64.17,179.9) .. controls (64.17,132.36) and (102.71,93.83) .. (150.25,93.83) .. controls (197.79,93.83) and (236.33,132.36) .. (236.33,179.9) .. controls (236.33,227.44) and (197.79,265.98) .. (150.25,265.98) .. controls (102.71,265.98) and (64.17,227.44) .. (64.17,179.9) -- cycle ;
\draw  [color={rgb, 255:red, 255; green, 0; blue, 255 }  ,draw opacity=1 ] (324.17,179.9) .. controls (324.17,132.36) and (362.71,93.83) .. (410.25,93.83) .. controls (457.79,93.83) and (496.33,132.36) .. (496.33,179.9) .. controls (496.33,227.44) and (457.79,265.98) .. (410.25,265.98) .. controls (362.71,265.98) and (324.17,227.44) .. (324.17,179.9) -- cycle ;
\draw [color={rgb, 255:red, 255; green, 0; blue, 255 }  ,draw opacity=1 ]   (410.25,179.9) -- (472.5,238.8) ;

\draw (102.35,182.9) node [anchor=north west][inner sep=0.75pt]   [align=left] {$\displaystyle {\displaystyle \textcolor[rgb]{0.61,0.61,0.61}{r( \Omega ) =r_{2}( \Omega )}}$};
\draw (407,235) node [anchor=north west][inner sep=0.75pt]   [align=left] {$\displaystyle {\displaystyle \textcolor[rgb]{1,0,1}{s( \Omega )}}$};

\end{tikzpicture}

\end{center}
\caption{The stadium.}
\label{fig:stadio}
\end{figure}
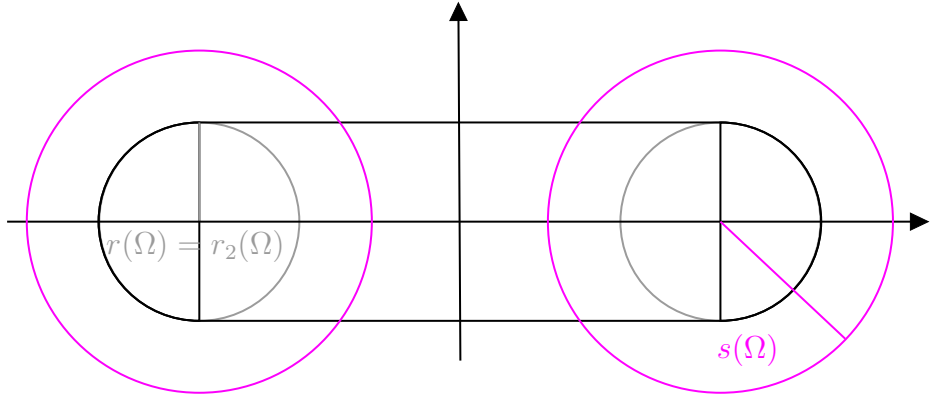
\end{esempio}

\begin{esempio}
If $\Omega\subset \R^2$ is a square, see Figure \ref{fig:square}, than 

\begin{equation*}
    \frac{1}{s(\Omega)}=\lambda_{2,\infty}(\Omega)=\begin{cases}
        \dfrac{\left(1+\frac{\sqrt{2}}{2}\right)\beta}{1+\frac{\sqrt{2}}{2}\beta \frac{D(\Omega)}{2}} & \text{if } \beta\ge \frac{2}{D(\Omega)}\\[2ex]
        \dfrac{2}{D(\Omega)} & \text{if } \beta< \frac{2}{D(\Omega)}.
    \end{cases}
\end{equation*}
    Also in this case, $\lambda_{2,\infty}(\Omega)$ is a continuous function of $\beta$. In this case, it is always true that

    $$\lambda_{1,\infty}(\Omega)<\lambda_{2,\infty}(\Omega).$$
\end{esempio}

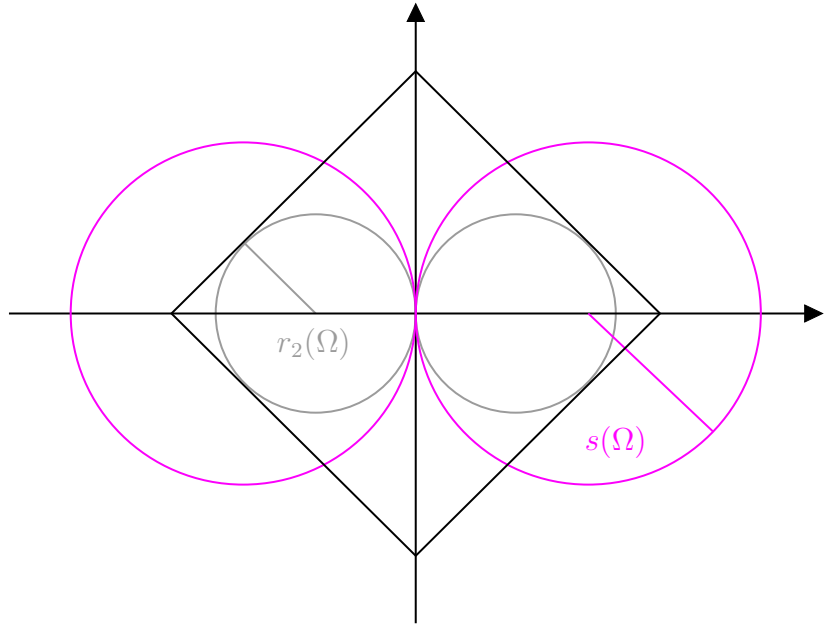
\begin{figure}[h!]
\begin{center}

\tikzset{every picture/.style={line width=0.75pt}} 

\begin{tikzpicture}[x=0.75pt,y=0.75pt,yscale=-1,xscale=1]

\draw  [color={rgb, 255:red, 155; green, 155; blue, 155 }  ,draw opacity=1 ] (311.15,164.9) .. controls (311.15,137.34) and (333.49,115) .. (361.05,115) .. controls (388.61,115) and (410.96,137.34) .. (410.96,164.9) .. controls (410.96,192.46) and (388.61,214.8) .. (361.05,214.8) .. controls (333.49,214.8) and (311.15,192.46) .. (311.15,164.9) -- cycle ;
\draw  [color={rgb, 255:red, 155; green, 155; blue, 155 }  ,draw opacity=1 ] (211.35,164.9) .. controls (211.35,137.34) and (233.69,115) .. (261.25,115) .. controls (288.81,115) and (311.15,137.34) .. (311.15,164.9) .. controls (311.15,192.46) and (288.81,214.8) .. (261.25,214.8) .. controls (233.69,214.8) and (211.35,192.46) .. (211.35,164.9) -- cycle ;
\draw [color={rgb, 255:red, 155; green, 155; blue, 155 }  ,draw opacity=1 ]   (261.25,164.9) -- (225.5,129.3) ;
\draw    (311.15,12.07) -- (311.15,320.73) ;
\draw [shift={(311.15,9.07)}, rotate = 90] [fill={rgb, 255:red, 0; green, 0; blue, 0 }  ][line width=0.08]  [draw opacity=0] (8.93,-4.29) -- (0,0) -- (8.93,4.29) -- cycle    ;
\draw    (108.2,164.9) -- (511.15,164.9) ;
\draw [shift={(514.15,164.9)}, rotate = 180] [fill={rgb, 255:red, 0; green, 0; blue, 0 }  ][line width=0.08]  [draw opacity=0] (8.93,-4.29) -- (0,0) -- (8.93,4.29) -- cycle    ;
\draw  [color={rgb, 255:red, 251; green, 0; blue, 251 }  ,draw opacity=1 ] (139,164.9) .. controls (139,117.36) and (177.54,78.83) .. (225.08,78.83) .. controls (272.61,78.83) and (311.15,117.36) .. (311.15,164.9) .. controls (311.15,212.44) and (272.61,250.98) .. (225.08,250.98) .. controls (177.54,250.98) and (139,212.44) .. (139,164.9) -- cycle ;
\draw  [color={rgb, 255:red, 251; green, 0; blue, 251 }  ,draw opacity=1 ] (311.17,164.9) .. controls (311.17,117.36) and (349.71,78.83) .. (397.25,78.83) .. controls (444.79,78.83) and (483.33,117.36) .. (483.33,164.9) .. controls (483.33,212.44) and (444.79,250.98) .. (397.25,250.98) .. controls (349.71,250.98) and (311.17,212.44) .. (311.17,164.9) -- cycle ;
\draw [color={rgb, 255:red, 255; green, 0; blue, 255 }  ,draw opacity=1 ]   (397.25,164.9) -- (459.5,224.3) ;
\draw   (311.15,43.02) -- (433.04,164.9) -- (311.15,286.79) -- (189.27,164.9) -- cycle ;

\draw (240.35,170.9) node [anchor=north west][inner sep=0.75pt]   [align=left] {$\displaystyle \textcolor[rgb]{0.61,0.61,0.61}{r_{2}( \Omega )}$};
\draw (394,220) node [anchor=north west][inner sep=0.75pt]   [align=left] {$\displaystyle {\displaystyle \textcolor[rgb]{1,0,1}{s( \Omega )}}$};

\end{tikzpicture}

\end{center}
\caption{The square.}
\label{fig:square}
\end{figure}

\subsection*{Acknowledgements}
The authors were partially supported by Gruppo Nazionale per l’Analisi Matematica, la Probabilità e le loro Applicazioni
(GNAMPA) of Istituto Nazionale di Alta Matematica (INdAM).   \\
Vincenzo Amato and Cristina Trombetti were partially supported by PRIN-PNRR 2022, P2022YFAJH: "Linear and Nonlinear PDE’S: New directions and Applications,"\\ CUP:E53D23018060001. \\
Alba Lia Masiello was partially supported by  PRIN 2022, 20229M52AS: "Partial differential equations and related geometric-functional
inequalities," CUP:E53D23005540006.

\addcontentsline{toc}{chapter}{Bibliografia}
\bibliographystyle{plain}
\bibliography{biblio}

\Addresses

\end{document}